\documentclass[letterpaper,english, 12pt]{article}
\usepackage{babel} 
\usepackage[ansinew]{inputenc}
\usepackage{amsmath, amssymb, amsthm}
\usepackage{nicefrac}
\usepackage[arrow, matrix, curve]{xy}
\usepackage{cite}
\usepackage{graphics}
\usepackage{graphicx}
\usepackage[lmargin=3.00cm,rmargin=3.00cm,tmargin=3cm,bmargin=3cm]{geometry}

\sloppy 

\newcommand{\R}{\mathbb{R}}

\newcommand{\Z}{\mathbb{Z}}
\newcommand{\Q}{\mathbb{Q}}
\newcommand{\C}{\mathbb{C}}
\newcommand{\curlyC}{\mathcal{C}}

\newcommand{\F}{\mathcal{F}}

\renewcommand{\H}{\mathcal{H}}

\newcommand{\colim}{\textnormal{colim}}

\newcommand{\into}{\hookrightarrow}

\newcommand{\M}{\mathcal{M}}

\newcommand{\hcob}{\textit{h}-cobordism }
\newcommand{\hcobs}{\textit{h}-cobordisms }

\newcommand{\rk}{\textnormal{rk}}
\newcommand{\z}{\zeta}

\renewcommand{\L}{\mathcal{L}}

\renewcommand{\Re}{\textnormal{Re}}
\renewcommand{\Im}{\textnormal{Im}}
\renewcommand{\tilde}{\widetilde}
\renewcommand{\bar}{\overline}

\newtheorem{Lemma}{Lemma}[section]
\newtheorem{Theorem}[Lemma]{Theorem}
\newtheorem*{Theorem*}{Theorem}
\newtheorem{Proposition}[Lemma]{Proposition}

\theoremstyle{definition}

\theoremstyle{remark}
\newtheorem{Remark}[Lemma]{Remark}

\setcounter{tocdepth}{2}


\begin{document}

\title{An Equivariant Version of Hatcher's $G/O$ Construction}
\author{Thomas Goodwillie, Kiyoshi Igusa\thanks{The second author is supported by NSA Grant \#H98230-13-1-0247.}, Christopher Ohrt}

\maketitle

 \begin{abstract}
Let $\H^s(BC_n)$ be the space of stable \hcobs of the classifying space of a cyclic group of order $n$. We explicitly construct generators of the rational homotopy groups $\pi_\ast\H^s(BC_n)\otimes \Q$ by generalizing a construction of Hatcher. This result will be used in a separate paper by the third author to classify axiomatic higher twisted torsion invariants.
\end{abstract} 

\tableofcontents

\section*{Introduction}

For a finite complex $X$ let $\H^s(X)$ be the space of stable \hcobs of $X$. This infinite loopspace is a delooping of the stable concordance space of (any smooth compact manifold that is homotopy equivalent to) $X$. The definition is extended to infinite complexes by direct limit over finite subcomplexes. The group $\pi_0\H^s(X)$ is the Whitehead group of $\pi_1(X)$. When $X$ is the classifying space of a finite group $\Gamma$, the homotopy groups of $\H^s(X)$ are finitely generated and coincide rationally with the algebraic $K$-groups of the group ring:
$$
\pi_j\H^s(B\Gamma)\otimes \Q \cong K_{j+1}(\Z\lbrack \Gamma\rbrack)\otimes \Q .
$$
 
In the case when $\Gamma$ is trivial, $\pi_j\H^s(*)$ is the direct limit over $M$ of $\pi_j\H(D^M)$, where $\H(D^M)$ is the space of \hcobs of the $M$-disk. By \cite{0691.57011} it is independent of $M$ when $M$ is large compared to $j$. By algebraic $K$-theory it has rank one when $j$ is a positive multiple of four and otherwise has rank zero. Hatcher defined a map $G(N)/O(N)\to \H^s(*)$ (explained in \cite{Igusa1}) and conjectured that it induces a rational isomorphism of $\pi_{4k}$ when $N$ is large compared to $k$. This conjecture was proved in \cite{BokstedtWaldhausen87}.
In \cite{Igusa2}, \cite{Igusa1} the second author used Igusa-Klein higher torsion to give another proof of the conjecture. \\

Here we consider not disks but products of lens spaces with disks. For a finite cyclic group $C_n$ we use an equivariant generalization of Hatcher's construction to give rational generators for $\pi_j\H^s(BC_n)$, the direct limit of $\pi_j \H(L_n^{2N-1}\times D^M)$ as $N$ and $M$ go to infinity. (In fact the limit is attained \cite{0691.57011}, although we do not use that fact.) For $k>0$ let $l_{k,n}$ be the rank of $\pi_{2k}\H^s(BC_n)$. This is $\lfloor\frac{n+2}{2}\rfloor$ if $k$ is even and $\lfloor\frac{n-1}{2}\rfloor=n-\lfloor\frac{n+2}{2}\rfloor$ if $k$ is odd. The rank of $\pi_{2k+1}\H^s(BC_n)$ is zero. 
We introduce a topological monoid $G_n(N)$ of $C_n$-equivariant self homotopy equivalences of $S^{2N-1}$ and show that the space $G_n/U$ obtained as colimit over $N$ of 
$$
G_n(N)/U(N)=fiber(BU(N)\to BG_n(N))
$$
is rationally equivalent to $BU$. We produce a map 
$$
\Delta^a:G_n(N)/U(N)\to \H^s(BC_n)
$$
for each $a\in \Z/n$, and by calculating higher torsion with twisted coefficients we prove:
 
\begin{Theorem*}[Main Theorem] If $N\ge k$ then some set of maps $\Delta^a$ gives an isomorphism 
	$$\pi_{2k}(G_n(N)/U(N))^{l_{k,n}}\otimes \Q\cong \pi_{2k}\H^s(BC_n)\otimes \Q.$$
\end{Theorem*}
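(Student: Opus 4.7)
The approach is rank comparison combined with detection by higher torsion. Both sides have $\Q$-dimension $l_{k,n}$: the right hand side by the rank formula stated in the introduction, and the left hand side because the stability hypothesis $N\ge k$ combined with the stated rational equivalence $G_n/U\simeq_\Q BU$ gives $\pi_{2k}(G_n(N)/U(N))\otimes \Q\cong \pi_{2k}(BU)\otimes \Q\cong \Q$. It therefore suffices to exhibit $l_{k,n}$ values of $a\in\Z/n$ for which
\begin{equation*}
\bigoplus_a \Delta^a_\ast:\;\bigoplus_a \pi_{2k}(G_n(N)/U(N))\otimes \Q \;\longrightarrow\; \pi_{2k}\H^s(BC_n)\otimes \Q
\end{equation*}
is surjective, equivalently (by the dimension count) injective.

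To detect elements on the right, for each unitary character $\chi_b:C_n\to U(1)$ indexed by $b\in\Z/n$ I would use higher Igusa--Klein torsion with coefficients twisted by $\chi_b$ to build a real-valued linear functional $\tau^{\chi_b}_{2k}:\pi_{2k}\H^s(BC_n)\otimes \Q\to \R$. The identity $\chi_{-b}=\overline{\chi_b}$ yields the symmetry $\tau^{\chi_b}_{2k}=(-1)^k \tau^{\chi_{-b}}_{2k}$. Dualizing the decomposition $K_{2k+1}(\Z[C_n])\otimes \Q\cong \bigoplus_{d\mid n}K_{2k+1}(\Z[\zeta_d])\otimes \Q$ and counting real/complex places via Borel's theorem, these $n$ functionals modulo the above conjugation symmetries yield exactly $l_{k,n}$ linearly independent functionals on $\pi_{2k}\H^s(BC_n)\otimes \Q$ (one per orbit of $b\mapsto -b$ if $k$ is even, one per \emph{free} orbit $\{b,-b\}$ if $k$ is odd). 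It is therefore enough to show that the matrix
\begin{equation*}
M_{b,a}\;:=\;\tau^{\chi_b}_{2k}\bigl(\Delta^a(\alpha)\bigr), \qquad a,b\in \Z/n,
\end{equation*}
(for $\alpha$ a generator of $\pi_{2k}(G_n(N)/U(N))\otimes \Q$) has rank $l_{k,n}$ modulo the conjugation symmetries.

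The heart of the proof is then the explicit computation of $M_{b,a}$. Tracking $\alpha$ through the equivariant Hatcher construction $\Delta^a$, I expect a formula of the form
\begin{equation*}
M_{b,a}\;=\;c_k\cdot \Re(\zeta_n^{ab}) \;\;(k\text{ even}), \qquad M_{b,a}\;=\;c_k\cdot \Im(\zeta_n^{ab}) \;\;(k\text{ odd}),
\end{equation*}
with $c_k\ne 0$ and $\zeta_n=e^{2\pi i/n}$, generalizing Igusa's torsion computation in the non-equivariant setting \cite{Igusa2, Igusa1} and reflecting the evaluation of the $(2k+1)$-polylogarithm at $\zeta_n^{ab}$ on the fibres of the twisted Hatcher bundle. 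Granting this, $M$ becomes a Vandermonde-type matrix in the $\zeta_n^{ab}$, and any non-vanishing $l_{k,n}\times l_{k,n}$ minor (equivalent to the $\Q$-linear independence of Borel regulators of the factors $\Z[\zeta_d]$) picks out the desired set of $a\in \Z/n$. The main obstacle is precisely this torsion calculation: one must identify the equivariant $h$-cobordism of $L_n^{2N-1}\times D^M$ underlying $\Delta^a(\alpha)$ together with the $\chi_b$-twisted local coefficient system, and carry out the Igusa--Klein higher torsion computation; once that formula is in hand, the Vandermonde/regulator non-vanishing step is essentially classical.
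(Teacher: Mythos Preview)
Your overall architecture matches the paper's: a dimension count reduces the problem to showing that the $n\times n$ matrix $M_{b,a}=\tau^{\chi_b}_{2k}(\Delta^a_*\alpha)$ has rank $l_{k,n}$, and the conjugation relation $M_{-b,a}=(-1)^kM_{b,a}$ is exactly the symmetry the paper exploits. The gap is in your predicted torsion formula. The actual computation (the paper's Lemma~4.3, using additivity and the lens-space torsion formula of \S2.3) gives
\[
M_{b,a}\;=\;n^k\,L_{k+1}(\zeta_n^{ab})\cdot \langle ch_{2k}(\xi),[S^{2k}]\rangle,
\qquad
L_{k+1}(z)=\Re\Bigl(i^{-k}\sum_{m\ge 1}\frac{z^m}{m^{k+1}}\Bigr),
\]
not $c_k\Re(\zeta_n^{ab})$ or $c_k\Im(\zeta_n^{ab})$. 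Thus $M$ is the real (resp.\ imaginary) part of $T_{a,b}=\L_{k+1}(\zeta_n^{ab})$, and this matrix is \emph{not} of Vandermonde type: each entry is an infinite weighted sum $\sum_m \zeta_n^{mab}/m^{k+1}$, not a single root of unity.

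Because of this, the rank computation is the hard part of the proof rather than a classical afterthought. The paper first reduces to showing the complex matrix $T$ is invertible (then $\rk\Re T=\rk(1+S)$ and $\rk\Im T=\rk(1-S)$ give $l_{k,n}$). Invertibility of $T$ is proved by an induction on $n$ driven by the Kubert identity $\L_{k+1}(z)=m^k\sum_{w^m=z}\L_{k+1}(w)$, which reduces to invertibility of the submatrix indexed by $(\Z/n)^*$. That submatrix is diagonalized by multiplicative characters, with eigenvalues $e_n(\chi)=\sum_{a\in(\Z/n)^*}\L_{k+1}(\zeta_n^a)\bar\chi(a)$; a further Kubert-based induction reduces to aperiodic $\chi$, where one computes $e_n(\chi)=c(1,\chi)\,L(\chi,k+1)$, a nonzero Gauss sum times a Dirichlet $L$-value at $k+1>1$, nonzero by the Euler product. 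Your allusion to ``$\Q$-linear independence of Borel regulators'' is in the right spirit, but a Vandermonde determinant will not do the job. (Incidentally, you do not need to know in advance that the $\tau^{\chi_b}_{2k}$ span the dual: once $M$ has rank $l_{k,n}$, the images of the $\Delta^a_*$ already span an $l_{k,n}$-dimensional subspace of a space of that same dimension.)
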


In \cite{Igusa1} the second author applied his work on higher torsion and the Hatcher map to classify axiomatic higher torsion invariants for constant coefficient systems.
In \cite{Axioms} the third author applies our work here on twisted higher torsion (with twisting by finite cyclic groups) and generalized Hatcher maps to classify axiomatic higher torsion invariants for coefficient systems in which the twisting is by \emph{any} finite group.

\subsection*{Outline of the paper}

In the first section we review the stable \hcob spaces $\H^s(X)$.\\

In the second section we recall some needed facts about higher torsion, specifically the version developed by the second author with John Klein.\\

In the third section we define the generalized Hatcher maps $\Delta^a$. First we define the spaces $BG_n(N)$ and $G_n(N)/U(N)$ and show that the colimits $BG_n$ and $G_n/U$ are rationally equivalent to a point and $BU$ respectively. Then we explicitly describe the map, or rather weak homotopy class,  $\Delta^1:G_n(N)/U(N)\to \H^s(BC_n).$ We start with a finite complex $B$ and any map $B\to G_n(N)/U(N)$, interpreted as a complex vector bundle of rank $N$ on $B$ together with a $C_n$-equivariant fiber homotopy trivialization of its unit sphere bundle. We use this to produce a map $B\to \H^s(BC_n)$, by making a bundle over $B$ whose fibers are \hcobs of the product of the lens space $L_n^{2N-1}=S^{2N-1}/{C_n}$ and a disk. For general $a\in \mathbb Z/n$ the map $\Delta^a:G_n(N)/U(N)\to \H^s(BC_n)$ is defined by left composition of $\Delta^1$ with a map $\H^s(BC_n)\to \H^s(BC_n)$ induced by the homomorphism $\zeta\mapsto \zeta^a$ from $C_n$ to $C_n$. \\

In the fourth section we complete the proof. The rank of the group $\pi_{2k}\H^s(BC_n)$ is known by algebraic $K$-theory. For each $k>0$ there are $n$ homomorphisms from $\pi_{2k}(G_n(N)/U(N))\otimes\Q\cong\Q$ to $\pi_{2k}\H^s(BC_n)\otimes \Q$, one for each of the maps $\Delta^a$. There are also $n$ homomorphisms $\pi_{2k}\H^s(BC_n)\to\R$, one for each one-dimensional complex representation of $C_n$; they give the higher torsion of lens space \hcob bundles with respect to the coefficient systems determined by the representations. We calculate the resulting $n\times n$ matrix in terms of values of a polylogarithm function at $n$-th roots of unity. By determining the rank of the matrix we verify that the maps $\Delta^a$ give generators for $\pi_{2k}\H^s(BC_n)\otimes \Q$ when $N\ge k$. 

\section{Review of spaces of \hcobs}

We give a definition of stable spaces of \hcobs and recall how they are related to algebraic $K$-theory.
\subsection{The space $\H^s(X)$}\label{hcobspace}

Recall that a cobordism between smooth closed manifolds $M$ and $M'$ is a smooth compact manifold $W$ whose boundary consists of $M$ and $M'$, and that it is called an \hcob if the inclusions $M\into W$ and $M'\into W$ are homotopy equivalences. The definition is extended to the case when $M$ and $M'$ are compact manifolds with boundary and $\partial M\cong \partial M'$. It is customary to let the boundary of $W$ be $M\cup (\partial M\times I)\cup M'$. Here we often use a different convention, letting $\partial W$ be $M\cup M'$, so that $M$ and $M'$ actually intersect in $\partial M=\partial M'$. It is easy to go back and forth between these two set-ups by rounding corners, and we will feel free to do so without comment.\\

When $M$ is fixed and $M'$ is variable we speak of \hcobs on $M$. The $h$-Cobordism Theorem classifies the \hcobs on $M$ up to a diffeomorphism fixed on $M\cup (\partial M\times I)$ (or, in the corner-rounded version, fixed on $M$), provided the dimension of $M$ is at least five.\\

The space of all diffeomorphisms from the trivial cobordism $M\times I$ to itself that are trivial on $(M\times 0)\cup (\partial M\times I)$ (that is, that restrict to the identity there) is called the concordance space and denoted by $\mathcal C(M)$. \\

By an \hcob bundle we mean a smooth fiber bundle whose fibers are \hcobs on a fixed manifold $M$. It is understood that the subbundle whose fibers are the \lq\lq zero ends\rq\rq\ of the \hcobs is a trivial bundle with fiber $M$, and that a definite trivialization is part of the data.\\

There is a classifying space $\H(M)$ for such bundles; for a compact manifold $B$ the homotopy classes of maps $B\to \H(M)$ correspond to isomorphism classes of \hcob bundles with base $B$. The loopspace $\H(M)$ is equivalent to $\mathcal C(M)$. \\

We briefly indicate one construction of $\H(M)$. If $M$ is a submanifold of some $\R^N$, then define $\H(M)$ as a space of submanifolds of $\R^N\times [0,1)$. A point of $\H(M)$ is any subset $W\subset \R^N\times [0,\infty)$ that is an \hcob on $M=M\times 0$ and transverse to $M\times 0$. For any $W\in \H(M)$, let $U_W$ be the set of all those $W'\in \H(M)$ such that there is a diffeomorphism $W\cong W'$ trivial on $M$. The topology of $\H^s(M)$ is defined by letting $U_W$  be open and topologizing it as a quotient of the space of embeddings $W\into R^N\times [0,\infty)$ trivial on $M$. Since the quotient map is a fibration and the space of embeddings is weakly contractible, it follows that the fiber is a delooping of the automorphism group of $W$, in particular a delooping of $\curlyC(M)$ if $W$ is the trivial \hcob.\\

We mention even more briefly an alternative simplicial approach: take $\H(M)$ to be the realization of a simplicial set in which a $k$-simplex is a bundle over $\Delta^k$ of \hcobs on $M$.\\

The stable concordance space
	$$\curlyC^s(M):=\colim_k\  \curlyC(M\times I^k)$$
is defined by iterating the Hatcher suspension map $s:\curlyC(M) \to \curlyC(M × I)$. The concordance stability theorem \cite{0691.57011} states that $s$ induces isomorphisms of homotopy groups up to roughly one third of the dimension of $M$. \\

There is an analogous suspension $s:\H(M)\to \H(M\times I)$. Ignoring corner-rounding issues, we can simply say that it takes $W$, an \hcob of $M$, to $W\times I$, an \hcob of $M\times I$. This leads to the stable \hcob space $\H^s(M)$, an infinite loopspace such that $\Omega\H^s(M)\simeq \curlyC^s(M)$ and $\pi_0\H^s(M)\cong Wh( \pi_1 M)$.\\

In each case (\hcobs and concordances) one can suspend more generally with respect to a vector bundle. If $\xi$ is a vector bundle over $M$ and $D^\xi(M)$ is its closed unit disk bundle then there is a map $\H(M)\to \H(D^\xi(M))$ which in the case of a trivial bundle is the suspension $s: \H(M)\to \H(M\times D^k)$. The colimit over disk bundles is equivalent to the sequential colimit above by cofinality, because every vector bundle is a summand of some trivial bundle.

\subsection{Functoriality and homotopy invariance}

A codimension zero embedding $f:M\into M'$ gives a way of making \hcobs of $M$ into \hcobs of $M'$ as indicated in Figure 1. 
The new \hcob $E'$ is the union $E\cup (M'\times I)$ of the old \hcob $E$ with a trivial cobordism, with each point $x\in M$ in the bottom of $E$ getting identifying with a point $(f(x),1)$ in the top of $M'\times I$. This construction applied fiberwise to a bundle of \hcobs yields a map $\H(M)\to \H(M')$, well defined up to homotopy, depending only on the isotopy class of the embedding $f$.\\

\begin{figure}
\label{hcobsum}

\begin{picture}(230,254)
\setlength{\unitlength}{1cm}
\put(3,0){\includegraphics[width=10cm]{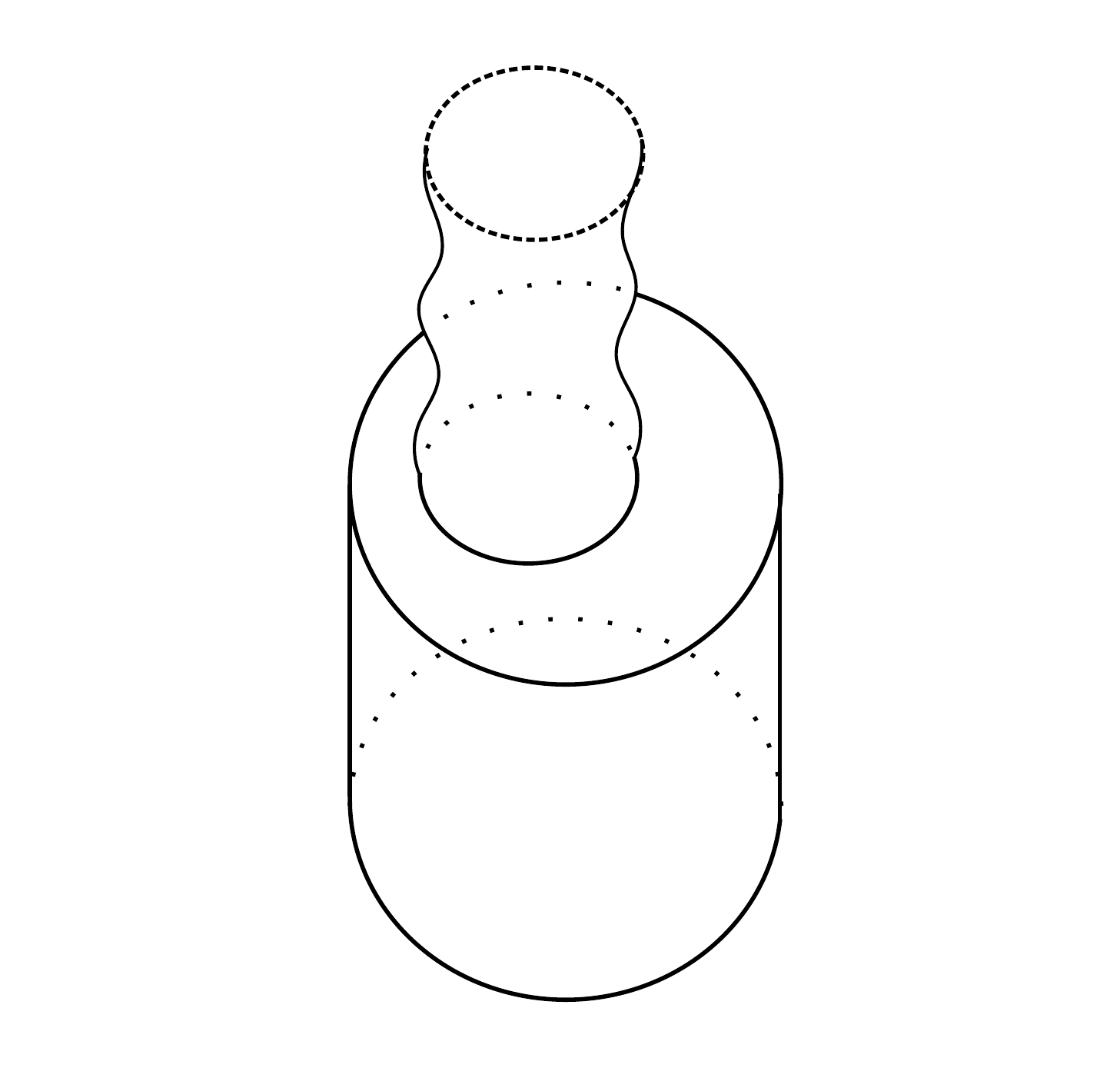}}
\put(9.0,5.3){$M'$}
\put(7.3,5){$f(M)$}
\put(7.3,6.5){$E$}
\put(7.5,2.5){$M'\times I$}
\end{picture}
\caption{Construction of the map $\H(M)\to \H(M')$}
\end{figure}

It follows that a homotopy class of continuous maps $f:M\to M'$ determines a homotopy class of maps $\H^s(f):\H^s(M)\to \H^s(M')$. In fact, choose $N$ large enough so that the homotopy class is represented by a smooth interior embedding $M\into M'\times D^N$, unique up to isotopy. Use the composition 
$$
\H(M)\to \H(D^\nu(M))\to \H(M'\times D^N),
$$
where the first map is Hatcher suspension with respect to the normal bundle $\nu$ of the embedding and the second is induced by the codimension zero embedding $D^\nu(M)\into M'\times D^N$. This produces a functor $\H^s$ from the homotopy category of finite complexes to the homotopy category of infinite loopspaces.\\

The functor can be extended to all CW complexes by defining $\H^s(X)$ as a (homotopy) colimit of $\H^s(Y)$ over finite subcomplexes $Y$ of $X$. For example, for the cyclic group $C_n$ we can take the classifying space $BC_n$ to be an increasing union of lens spaces $L_n^{2N-1}$ as $N\to \infty$ and take $\H^s(BC_n)$ to be the colimit of $\H^s(L_n^{2N-1})$. 

\subsection{Relation with algebraic $K$-theory}

Waldhausen's functor $A$ from spaces to spectra is related to smooth manifolds by a splitting:
$$A(X)\simeq  \Sigma^\infty(X_+)\times Wh^{\mathit{diff}}(X),$$
where $\Omega^\infty Wh^{\mathit{diff}}(X)$ is a delooping of $\H^s(X)$. A full proof of this is given in \cite{zbMATH06165458}.\\

On the other hand, $A(X)$ is also closely related to the algebraic $K$-theory of rings. In particular for a based path-connected space $X$ there is a natural map 
$$A(X)\to K(\Z[\pi_1X])$$
to the $K$-theory spectrum of the group ring of the fundamental group. When $X$ has trivial higher homotopy groups then this map is a rational homotopy equivalence (see \cite{W85} p. 399, paragraph following Corollary 2.3.8). Thus for a discrete group $\Gamma$ we have
$$
K_i(\Z[\Gamma])\otimes \Q\cong H_i(B\Gamma;\Q)\oplus \pi_{i-1}\H^s(B\Gamma) \otimes \Q,
$$
and when $\Gamma$ is finite we simply have 
\begin{equation}\label{KandH}
K_i(\Z[\Gamma])\otimes \Q\cong \pi_{i-1}\H^s(B\Gamma)\otimes\Q.
\end{equation}

The left hand side is known (see \cite{1189.19001}):

\begin{Theorem} If $\Gamma$ is a finite group and $i\ge 2$ then
	$$K_i(\Z[\Gamma])\otimes \Q\cong\left\{
		\begin{array}{lll}
			\Q^r	&	\textnormal{if}	&	i\equiv 1\mod 4\\
			\Q^c	&	\textnormal{if}	&	i\equiv 3\mod 4\\
			0	&	\textnormal{if}	&	i\equiv 0\mod 2
		\end{array}\right. ,
	$$
where $r$ is the number of irreducible real representations of $\Gamma$ and $c$ is the number of irreducible real representations of complex type.
\end{Theorem}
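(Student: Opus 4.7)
The plan is to reduce the computation to Borel's theorem on the rational $K$-theory of rings of integers of number fields, in three standard steps.

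First, I would argue that the inclusion $\Z[\Gamma]\hookrightarrow\Q[\Gamma]$ induces an isomorphism on $K_i(-)\otimes\Q$ for $i\geq 1$. This comes from the localization sequence for the Dedekind domain $\Z$: the cofiber term involves the $K$-theory of finite rings such as $\Z[\Gamma]/p$, whose higher $K$-groups are finite by Quillen's computation. Hence all finite-prime contributions die after tensoring with $\Q$, reducing the problem to computing $K_i(\Q[\Gamma])\otimes\Q$.

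Second, apply Wedderburn to write $\Q[\Gamma]\cong\prod_j M_{n_j}(D_j)$, a product of simple factors in which each $D_j$ is a division algebra of finite dimension over its center $F_j$, a number field. Morita invariance gives $K_i(M_{n_j}(D_j))\cong K_i(D_j)$, and the reduced norm induces $K_i(D_j)\otimes\Q\cong K_i(F_j)\otimes\Q$; the latter uses the rational vanishing of $SK_i$ for central simple algebras over number fields (Wang's theorem for $i=1$ together with Merkurjev--Suslin type input for higher $i$).

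Third, apply Borel's theorem to each $F_j$: for a number field $F$ with $r_1(F)$ real and $r_2(F)$ complex places, $K_i(F)\otimes\Q$ has rank $r_1(F)+r_2(F)$ if $i\equiv 1\pmod 4$, rank $r_2(F)$ if $i\equiv 3\pmod 4$, and rank zero if $i\geq 2$ is even (for $i\geq 2$ one has $K_i(F)\otimes\Q\cong K_i(\mathcal{O}_F)\otimes\Q$ via the Dedekind localization sequence). To convert place counts into representation counts, observe that a simple factor of $\R[\Gamma]$ is a matrix algebra over $\R$, $\C$, or $\mathbb{H}$, corresponding to an irreducible real representation of real, complex, or quaternionic type respectively. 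Since $F_j\otimes_\Q\R\cong\R^{r_1(F_j)}\times\C^{r_2(F_j)}$, each real place of $F_j$ contributes a simple factor of $\R[\Gamma]$ of real or quaternionic type and each complex place contributes one of complex type. Summing, $\sum_j r_2(F_j)=c$ and $\sum_j(r_1(F_j)+r_2(F_j))=r$, which substituted into Borel's formula yields the stated ranks.

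The main obstacle is the rational triviality of the reduced-norm kernels $SK_i$ for division algebras over number fields. The first and third steps are mechanical given classical input, but this Morita/reduced-norm step relies on nontrivial $K$-theoretic machinery, especially in degrees $i>1$.
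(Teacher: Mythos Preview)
The paper does not actually prove this theorem; it simply quotes it as a known result, citing Jahren \cite{1189.19001}. So there is no ``paper's own proof'' to compare against. Your outline is essentially the standard argument behind the cited result, and the overall strategy---pass from $\Z[\Gamma]$ to $\Q[\Gamma]$, decompose via Wedderburn, reduce each simple factor to its center, and invoke Borel---is correct and leads to the stated ranks. Your third step, matching Borel's place counts with the number of irreducible real representations of each type, is carried out cleanly and correctly.

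One point deserves more care. In your first step you invoke ``the localization sequence for the Dedekind domain $\Z$'' to conclude that $K_i(\Z[\Gamma])\otimes\Q\to K_i(\Q[\Gamma])\otimes\Q$ is an isomorphism. But $\Z[\Gamma]$ is not regular when $|\Gamma|>1$, so Quillen's localization does not directly identify the cofiber with $K$-theory of the quotient rings $\Z[\Gamma]/p$. The usual fix is to interpose a maximal $\Z$-order $\mathcal{M}\subset\Q[\Gamma]$ containing $\Z[\Gamma]$: a conductor-square argument shows $K_i(\Z[\Gamma])\to K_i(\mathcal{M})$ is a rational isomorphism for $i\ge 1$ (the discrepancy is governed by finite rings, whose higher $K$-groups are finite by Quillen), and then $\mathcal{M}$ \emph{is} regular, so localization applies and gives $K_i(\mathcal{M})\otimes\Q\cong K_i(\Q[\Gamma])\otimes\Q$. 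The conclusion you want is true; only the one-line justification is too glib.

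Regarding your second step, you are right that this is where the real content lies. Rather than framing it as vanishing of $SK_i$ plus Merkurjev--Suslin, it may be cleaner to note that Borel's computation of the stable real cohomology of arithmetic groups applies directly to $GL_n$ over a maximal order in $D_j$, and yields the same answer (in terms of the archimedean places of the center $F_j$) as for $GL_n$ over $\mathcal{O}_{F_j}$. Either route gives $K_i(D_j)\otimes\Q\cong K_i(F_j)\otimes\Q$, as you need.
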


If $\Gamma$ is finite and abelian, then $r+c=|\Gamma|$ and $r-c=|\Gamma/2|$. In particular when $\Gamma$ is $C_n$ we have $r=\lfloor\frac{n+2} 2\rfloor$ and $c=\lfloor\frac{n-1} 2\rfloor$.

\section{Review of higher torsion invariants}

Higher torsion invariants have been developed by J. Wagoner, J. R. Klein, M. Bismut, J. Lott, W. Dwyer, M. Weiss, E. B. Williams, S. Goette, the second author and many others (\cite{0793.19002}, \cite{0837.58028}, \cite{1077.19002}, \cite{1071.58025}, \cite{Igusa2}). Here we use the Igusa-Klein version, which is defined using parametrized Morse theory. We recall some of its properties. \\

\subsection{General properties}

Let $F\into E\to B$ be a smooth fiber bundle, where $B$ and $E$ are compact smooth manifolds and $F$ is an orientable smooth manifold. If $F$ has a boundary then the fiberwise or vertical boundary $\partial^v E\to B$ is a subbundle of $E.$  We assume that $B$ is connected. Let $\F$ be a local system on $E$ given by a finite-dimensional unitary complex representation of the fundamental group.\\

Assume also that either
\begin{enumerate}
\item $\pi_1B$ acts trivially on $H_*(F;\F)$ or
\item $\F$ is a one dimensional representation and $H_*(F;\F)$ is unipotent as a $\pi_1B$ module.
\end{enumerate}
Then the Igusa-Klein torsion is defined (by 5.7.5 and 3.6.12 in \cite{Igusa2}). It is a cohomology class of the base space:
	$$\tau_k^{\mathit{IK}}(E;\F)\in H^{2k}(B;\R).$$
In this paper we will usually assume Condition (ii). In the twisted Hatcher examples, both conditions will hold.\\

We now list some relevant properties of Igusa-Klein torsion. Details can be found in \cite{Igusa2}, \cite{Igusa1}, and \cite{Axioms}.\\

\textit{Naturality.} Torsion is a characteristic class: for a map $f:B'\to B$ and a bundle $E\to B$ with local system $\F$ on $E$ we have
	$$\tau_{k}^{\mathit{IK}}(f^*E;f^*\F)=f^*\tau_{k}^{\mathit{IK}}(E; \F)\in H^{2k}(B';\R).$$
Note that naturality implies that for trivial bundles (bundles pulled back from a one-point space) the torsion is zero. \\

\textit{Geometric additivity.} 
Suppose that $E_1\cup E_2\to B$ is a bundle such that $E_1\to B$, $E_2\to B$, and $E_1\cap E_2$ are subbundles. We assume that the fibers are such that the manifold $F$ is the union of manifolds $F_1$ and $F_2$ along a codimension zero submanifold of their boundaries. Then
	$$\tau_k^{\mathit{IK}}(E_1\cup E_2; \F)=\tau_k^{\mathit{IK}}(E_1; \F_1)+\tau_k^{\mathit{IK}}(E_2; \F_2)-\tau_k^{\mathit{IK}}(E_1\cap E_2; \F_{12}),$$
if all four terms are defined. \\

\textit{Stability for disk bundles.} Let $X\to E$ be a linear disk bundle (the bundle of closed disks associated with a vector bundle) and suppose that $E\to B$ is a bundle for which torsion is defined. Then 
\begin{eqnarray}\label{IKstability}\tau^{\mathit{IK}}_{k}(X; \F)=\tau^{\mathit{IK}}_{k}(E; \F).\end{eqnarray}

\textit{Additivity for coefficients.} For two local systems $\F_1$ and $\F_2$ on $E$ we have 
	$$\tau_{k}^{\mathit{IK}}(E; \F_1\oplus \F_2)=\tau_{k}^{\mathit{IK}}(E; \F_1)+\tau_{k}^{\mathit{IK}}(E; \F_2).$$
	
\textit{Transfer for coefficients.} If bundles $\tilde E\to B$ and $E\to B$ are related by a finite covering $\pi:\tilde E\to E$, then for every local system $\tilde \F$ on $\tilde E$ we have
	$$\tau_k^{\mathit{IK}}(\tilde E; \tilde \F)=\tau_k^{\mathit{IK}}(E;\pi_*\tilde \F),$$
where $\pi_*$ denotes the push-down operator for local systems.

\begin{Remark} The second author investigated axiomatic higher torsion, taking some of the formal properties above (and others not listed here) as axioms for invariants of unipotent manifold bundles. Considering only the case of untwisted coefficient systems, he showed that the space of higher torsion invariants is two dimensional and spanned by the Igusa-Klein torsion and the Miller-Morita-Mumford class, which is essentially a Chern class \cite{Igusa1}. The third author investigates axiomatic higher torsion for twisted coefficients and uses the results of this paper to obtain an analogous classification for twisting by any finite group \cite{Axioms}.
\end{Remark}

\subsection{Torsion of bundles of \hcobs}\label{hcobtorsion}

Let $W\into E\to B$ be a bundle of \hcobs on $M$, with trivial subbundle $M\into M\times B\to B$ (the zero end of the cobordism). Since the inclusion $M\times B\into E$ is a homotopy equivalence, a local system $\F$ on $M$ yields a local system on $E$, which we will also call $\F$. The torsion $\tau^{\mathit{IK}}(E;\F)$ is defined, since $\pi_1B$ acts trivially up to homotopy on the fiber.\\

Given such a bundle $W\into E\to B$, the Hatcher suspension discussed in \ref{hcobspace} produces a bundle $s(W)\into E'\to B$ of \hcobs of $M\times I$. Since $E'$ is a (trivial) $1$-disk bundle over $E$, Equation \eqref{IKstability} applies and shows that the new bundle has the same torsion as the old one. Therefore we may speak of the torsion of a stable \hcob bundle; together, a map $\varphi:B\to \H^s(M)$ and a local system $\F$ on $M$ yield an element of $H^{2k}(B;\R)$. \\

Since it is natural in $B$, this class may be described in terms of a universal example, an element $\tau_{k,M\F}^h\in H^{2k}(\H^s(M);\R)$. Thus we write $\varphi^*\tau^h_{k,M,\F}$ for $\tau_k^{\mathit{IK}}(E;\F)$ if the \hcob bundle $E\to B$ is given stably by the map $\varphi:B\to \H^s(M)$. \\

The torsion of stable \hcob bundles is natural with respect to $M$ as well. In fact, if $f:M\to M'$ is a map between manifolds and $\F$ is a local system on $M'$ then 
	$$\tau_{k,M,f^*\F}^h=\H^s(f)^*\tau_{k,M',\F}^h.$$
To see this, assume without loss of generality that $f$ is a codimension zero embedding. Consider any bundle $E\to B$ of \hcobs of $M$, and let $E'\to B$ be the resulting bundle of \hcobs of $M'$ (Figure \ref{hcobsum}). The assertion is that  $\tau_k^{\mathit{IK}}(E'; \F)=\tau_k^{\mathit{IK}}(E; f^*\F)$. Additivity gives
	\begin{eqnarray*}	
		\tau_k^{\mathit{IK}}(E'; \F)	&	=	&	\tau_k^{\mathit{IK}}(E; f^*\F)  + \tau_k^{\mathit{IK}}(M'\times I\times B;\F) - \tau_k^{\mathit{IK}}(M'\times B;\F),
	\end{eqnarray*}
and the last terms are zero because the bundles are trivial.\\

We can extend the definition from manifolds to spaces, defining $\tau_{k,X,\F}^h\in H^{2k}(\H^s(X);\R)$ when $\F$ is a local system on $X$. There is a unique way to do this such that naturality continues to hold, in other words such that
\begin{eqnarray}\label{othernat}\tau_{k,X,f^*\F}^h=\H^s(f)^*\tau_{k,X',\F}^h\end{eqnarray}
for $f:X\to X'$.

\subsection{Polylogarithms and the torsion of linear lens space bundles}\label{lenstorsion}

We recall a formula ((\ref{formula}) below, taken from \cite{Axioms} and \cite{Igusa2}) for the torsion of a bundle of lens spaces determined  by a complex vector bundle. This will be needed below in computing the torsion of some bundles of \textit{h}-cobordisms. \\

Let $\xi$ be a complex vector bundle of rank $N$ on a compact manifold $B$. Let $S(\xi)$ be the unit sphere bundle, equipped with the canonical free action of the group $C_n$. It becomes important to keep track of generators at this point. From now on we identify $C_n$ with the group of complex $n$-th roots of unity. The orbit space $S(\xi)/C_n$ is a bundle of $(2N-1)$-dimensional lens spaces over $B$.  \\

For any complex $n$-th root of unity $\zeta$, define the one-dimensional local system $\F_{\zeta}$ on $S(\xi)/C_n$ to be $S(\xi)\times_{C_n} \C\to S(\xi)/C_n,$ where the generator $\zeta_n=e^{2\pi i/n}\in C_n$ acts on $\C$ by multiplication by $\zeta.$ \\

Here is the torsion formula:
\begin{eqnarray}\label{formula}\tau_{k}^{\mathit{IK}}(S(\xi)/C_n; \F_{\zeta})=-{n^k}L_{k+1}(\zeta)ch_{2k}(\xi).
\end{eqnarray}
The function $L_{k+1}$ is the real polylogarithm defined by
	$$L_{k+1}(z)=\Re\left(\frac 1 {i^k} \L_{k+1}(z)\right),$$
where $\L_{k+1}$ is the complex polylogarithm
		$$\L_{k+1}(z)= \sum_{m=1}^\infty \frac{z^m}{m^{k+1}}.$$
		
The formula \eqref{formula} is proved as Theorem 2.8.4 in \cite{Igusa2}. We recall some parts of the argument. \\

Using the splitting principle and the naturality of torsion one can reduce to the case when $\xi$ is a direct sum of line bundles.\\

One can then reduce further to the case of a line bundle using geometric additivity and other properties of torsion. \\

Let $\xi$ be a complex line bundle. Transfer and additivity for coefficients together imply
\begin{eqnarray}\label{one}\tau_{k}^{\mathit{IK}}(S(\xi)/C_n; \F_{\zeta^m})=\sum_{\omega^m=1}\tau_{k}^{\mathit{IK}}(S(\xi)/C_{mn}; \F_{\omega\zeta}).
\end{eqnarray}
We also have (by naturality)
\begin{eqnarray}\label{two}\tau_k^{\mathit{IK}}(S(\xi)/C_n;\F_{\zeta})=m^{-k}\tau_k^{\mathit{IK}}(S(\xi)/C_{mn};\F_{\zeta})
\end{eqnarray}
for every $m\ge 1$.
This yields a function $f_\tau$ from the set of all complex roots of unity to $\H^{2k}(B;\R),$ by putting 
	$$f_\tau(\zeta):=n^{-k}\tau_k^{\mathit{IK}}(S(\xi)/C_n;\F_{\zeta}).$$
Here $\zeta$ is a (not necessarily primitive) $n$-th root of unity. This quantity is independent of $n$ by Equation \eqref{two}. \\

Now Equation \eqref{one} says that the function $f_\tau$ satisfies the \it Kubert identity\rm\  for every $m>0$:
\begin{eqnarray}\label{Kubert}f(z)=m^k\sum_{w^m=z}f(w).
\end{eqnarray}
It is shown in \cite{Igusa2} that $f_\tau$ extends continuously to the circle $|z|=1$. The function $\L_{k+1}$ also satisfies \eqref{Kubert}.
By considering Fourier coefficients one sees that the space of all continuous functions on the circle that satisfy \eqref{Kubert}
is two-dimensional, spanned by $\L_{k+1}$ and its complex conjugate. 
In view of the behavior of torsion under complex conjugation of coefficient systems, we also have 
	$$\tau_k^{\mathit{IK}}(S(\xi)/C_n;\F_{\zeta^{-1}})=(-1)^k\tau_k^{\mathit{IK}}(S(\xi)/C_n;\F_{\zeta}),$$
and it follows that 
	$$\tau_{k}^{\mathit{IK}}(S(\xi)/C_n; \F_{\zeta})=K{n^k}L_{k+1}(\zeta)c_1(\lambda)^k,$$
where $c_1(\lambda)$ is the first Chern class of the line bundle $\lambda$ and $K$ is some constant depending only on $k$. Evaluation of $K$ in a special case (Theorem 7.10.2 in \cite{Igusa2}) gives (4). (See \cite{Axioms} and \cite{Igusa2} for details).\\

\section{Construction of the generalized Hatcher maps}

\subsection{The space $G_n/U$}

Let $C_n$ act in the usual way on $\C^N$ and on its unit sphere $S^{2N-1}$. Let $\M_n(N)$ be the topological monoid of $C_n$-equivariant maps $S^{2N-1}\to S^{2N-1}$, and let $G_n(N)\subset \M_n(N)$ consist of those equivariant maps which are homotopy equivalences. (Note that because the group is acting freely on the sphere every equivariant map that is nonequivariantly a homotopy equivalence is in fact invertible up to equivariant homotopy.) There is an equivariant Freudenthal suspension map from $\mathcal M_n(N)$ to $ \mathcal M_n(N+1)$, which takes $G_n(N)$ into $G_n(N+1)$. Define 
	$$\mathcal M_n:=\lim_\rightarrow \mathcal M_n(N).$$
	$$G_n:=\lim_\rightarrow G_n(N).$$
	
The topological monoid $G_n(N)$ is grouplike (that is, the monoid $\pi_0(G_n(N))$ is a group), and therefore it gets a connected delooping in the usual way. The space $BG_n(N)$ is a classifying space for fibrations with free action of $C_n$ such that each fiber is equivariantly homotopy equivalent to $S^{2N-1}$. In the limit we have $BG_n$, a connected delooping of $G_n$. \\

The Lie group $U(N)$ is contained in $G_n(N)$. Let $G_n(N)/U(N)$ be the homotopy fiber of $BU(N)\to BG_n(N)$, and let $G_n/U$ be the homotopy fiber of $BU\to BG_n$. 

\begin{Lemma} \label{spaceG}	For $0\le m<2N-1$ the group $\pi_m G_n(N)$ is finite and independent of $N$. Therefore $\pi_m G_n$ is finite for all $m\ge 0$, and for every $k>0$ the map $\pi_{2k}(G_n/U)\to\pi_{2k}(BU)\cong \Z$ has finite kernel and cokernel.
\end{Lemma}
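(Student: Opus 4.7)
The plan is to reduce the computation of $\pi_m G_n(N)$ to a nonequivariant mapping-space problem via the quotient $p : S^{2N-1} \to L_n^{2N-1} =: L$, run obstruction theory (the Federer spectral sequence), and then deduce the remaining assertions formally.

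First I would use that $p$ is a principal $C_n$-bundle. Every $C_n$-equivariant self-map of $S^{2N-1}$ descends to a self-map of $L$ inducing the identity on $\pi_1 L \cong C_n$, and each such descended map has exactly $n$ equivariant lifts (differing by deck transformations). This realises the projection $\M_n(N) \to \Map(L,L)$ as an $n$-fold covering onto the union of components whose elements induce the identity on $\pi_1$; restricting to equivalences gives the same picture for $G_n(N)\subset \M_n(N)$. The long exact sequence of this covering then yields $\pi_m G_n(N) \cong \pi_m \Map(L,L)_{\id}$ for $m \ge 2$, with $\pi_0$ and $\pi_1$ controlled by the same sequence together with the finiteness of $C_n$.

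To compute $\pi_m \Map(L,L)_{\id}$ I would use the Federer spectral sequence
$$E_2^{s,t} = H^s(L; \pi_t L) \Longrightarrow \pi_{t-s} \Map_*(L,L,\id),$$
combined with the evaluation fibration $\Map_*(L,L) \to \Map(L,L) \to L$ to pass to unbased maps. The target $L$ is nilpotent because $\pi_1 L = C_n$ sits inside the connected Lie group $U(N)$ and hence acts trivially on higher homotopy groups, so the spectral sequence is well-behaved. Since $\pi_t L \cong \pi_t S^{2N-1}$ for $t \ge 2$, a direct case check shows that every $E_2$-term contributing to $\pi_m$ in the range $0 < m < 2N-1$ is finite: the coefficient $\pi_{m+s}L$ vanishes for $2 \le m+s \le 2N-2$, equals $\Z$ only when $m+s = 2N-1$ (in which case $0 < s < 2N-1$ forces $H^s(L; \Z) \in \{0, \Z/n\}$), and is a finite stable homotopy group of spheres when $m+s \ge 2N$. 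The case $m=0$, i.e.\ finiteness of $\pi_0 G_n(N)$, follows from standard obstruction-theoretic computations of $[L,L]$ restricted to equivalences of degree $\pm 1$ inducing the identity on $\pi_1$. Independence of $N$ in the same range comes from comparing the spectral sequences for $L_n^{2N-1}$ and $L_n^{2N+1}$: the additional cells lie outside the relevant bidegrees, so every $E_2$-term, differential, and extension problem is unchanged. (Equivalently, invoke the equivariant Freudenthal suspension theorem, which in the free case reduces to the nonequivariant one for the quotients.)

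Finally, $\pi_m G_n = \colim_N \pi_m G_n(N)$ is then finite for all $m \ge 0$, so each $\pi_j BG_n = \pi_{j-1} G_n$ is finite for $j \ge 1$. The long exact sequence of the defining fibration $G_n/U \to BU \to BG_n$,
$$\pi_{2k+1}BG_n \longrightarrow \pi_{2k}(G_n/U) \longrightarrow \pi_{2k}BU \cong \Z \longrightarrow \pi_{2k}BG_n,$$
then has finite outer terms, giving the middle map finite kernel and cokernel. The main obstacle is the stabilization step; the cleanest route is direct comparison of the Federer spectral sequences, which requires some bookkeeping to match the cellular filtrations and verify that differentials and extension problems are unaffected in the stable range, but is routine given the explicit cell structure of the lens spaces.
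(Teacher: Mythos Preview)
Your argument is correct and matches the obstruction-theoretic sketch the paper gives first: interpret $\mathcal M_n(N)$ as a mapping (or section) space over the lens space and read off finiteness from a spectral sequence with $E_2=H^\ast(L_n^{2N-1};\pi_\ast S^{2N-1})$. Your variant, passing through the $n$-fold covering $\mathcal M_n(N)\to \Map(L,L)_{\mathrm{id}_\ast}$ and the Federer spectral sequence, is equivalent to the paper's use of the section space $\Gamma(S^{TL})$; the $E_2$-terms coincide since $\pi_t L\cong \pi_t S^{2N-1}$ for $t\ge 2$.

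Where the paper diverges is in its second, detailed proof. Rather than bookkeeping the spectral sequences for varying $N$, it identifies $\mathcal M_n(N)$ with $\Gamma(S^{TL})$ directly and then uses a Thom--Pontryagin argument to show $\pi_i\Gamma(S^{TM})\cong \pi_i^S(M_+)$ for $i<\dim M-1$. Applied to $M=L_n^{2N-1}$ this gives $\pi_i\mathcal M_n\cong \pi_i^S((BC_n)_+)$ in one stroke, so finiteness and $N$-independence come for free from the stable identification, and the monoid structure on $\pi_0\mathcal M_n\cong\Z$ is computed explicitly (Lefschetz/degree count gives $x\ast y=x+y-nxy$, hence $\pi_0G_n$ has order at most $2$). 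Your route requires the separate stabilization comparison you flag at the end; the paper's buys that step, and the precise $\pi_0$, at the cost of setting up the Thom--Pontryagin correspondence.
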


We first sketch a proof by obstruction theory. Let $L_n^{2N-1}$ be the lens space $S^{2N-1}/C_n$. Points in $\mathcal M_n(N)$ can be made to correspond to sections of a certain bundle whose base is $L_n^{2N-1}$ and whose fibers are $(2N-1)$-dimensional spheres (see below for details). Thus there is a spectral sequence for its homotopy groups, with
$$E^2_{i,j}=H^{-i}(L_n^{2N-1};\pi_jS^{2N-1}).$$
Note that $E^2_{i,j}$ is zero except when $0\le -i\le 2N-1$ and $j\ge 2N-1$, and that modulo finite groups there is nothing in $E^2$ except two copies of $\Z$: one at $(i,j)=(0,2N-1)$) and one at $(i,j)=(1-2N,2N-1)$. Since each total degree in the range $0<i+j<2N-1$ has only finite groups, the group $\pi_m\mathcal M_n(N)=\pi_mG_n(N)$ is finite for $m$ in this range. The monoid $\pi_0\mathcal M_n(N)$ is infinite, but the group $\pi_0G_n(N)$ (the set of invertible elements in that monoid) can be seen to be finite, in fact of order at most two. \\

The interested reader might work out the details of the argument sketched above. This kind of spectral sequence is constructed in \cite{Schultz}. We ignored base points, and we ignored issues about exact sequences of sets as opposed to groups, nor did we verify the independence of $N$.\\

We take another approach that yields a more detailed statement:\\

\begin{Lemma}\label{better}
The map $\pi_i\mathcal M_n(N)\to \pi_i\mathcal M_n$ is an isomorphism as soon as $2N>i$. For $i>0$ we have $\pi_i\mathcal M_n\cong \pi_i^S((BC_n)_+)$ for any basepoint. We also have $\pi_0\mathcal M_n\cong\Z$ as a set. The monoid structure on $\pi_0$  is given by $x*y=x+y-nxy$.  
\end{Lemma}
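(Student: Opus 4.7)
The plan is first to identify $\mathcal M_n(N) = \Map^{C_n}(S^{2N-1}, S^{2N-1})$ with a space of sections. Because $C_n$ acts freely on $S^{2N-1}$, the assignment $f \mapsto \bigl([x] \mapsto [x, f(x)]\bigr)$ is a homeomorphism onto $\Gamma(L_n^{2N-1}; E_N)$, the space of sections of the sphere bundle
$$E_N \;=\; S^{2N-1} \times_{C_n} S^{2N-1} \;\longrightarrow\; L_n^{2N-1},$$
whose fiber is $S^{2N-1}$. The stabilization $f \mapsto f*\id_{S^1}$ is then equivariant suspension by the free $C_n$-representation $\C$, and since $S^{2N-1}$ is $(2N-2)$-connected, equivariant Freudenthal yields $\pi_i\mathcal M_n(N) \xrightarrow{\sim} \pi_i\mathcal M_n(N+1)$ in the range $2N > i$, which is the first assertion.

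\textbf{Identification with $\pi_i^S((BC_n)_+)$.}
Next I will run the Federer / Moore--Postnikov spectral sequence for the section space. Because $C_n$ acts on $S^{2N-1}$ through $U(N)$, the action on its homotopy groups is trivial and the spectral sequence has untwisted coefficients:
$$E_2^{-p,q} \;=\; H^p\bigl(L_n^{2N-1}; \pi_q S^{2N-1}\bigr) \;\Longrightarrow\; \pi_{q-p}\mathcal M_n(N).$$
Poincar\'e duality on the closed orientable manifold $L_n^{2N-1}$, together with Freudenthal $\pi_q S^{2N-1} \cong \pi_{q-(2N-1)}^S$ in the stable range, rewrites the $E_2$-page (with $s = q-(2N-1)$ and $i = q-p$) as $H_{i-s}(L_n^{2N-1}; \pi_s^S)$. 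As long as $2N > i$ the index $i-s$ sits in the range where $L_n^{2N-1}\to BC_n$ is a homology isomorphism, so this is precisely the Atiyah--Hirzebruch spectral sequence for $\pi_i^S((BC_n)_+)$, giving the second assertion.

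\textbf{$\pi_0$ and monoid structure.}
For the set-level statement, any equivariant $f$ descends to $\bar f: L_n^{2N-1} \to L_n^{2N-1}$ inducing the identity on $\pi_1 = C_n$. By naturality of the Bockstein and cup product on $H^*(L_n^{2N-1};\Z)$, this forces $\deg f \equiv 1 \pmod n$; conversely, every such degree is realized (for example by $z_1 \mapsto z_1^{1+nk}$ on a coordinate circle, extended by the identity). Parameterizing $\pi_0 \mathcal M_n \cong \Z$ via $f \mapsto x := (1-\deg f)/n$, the multiplicativity of degree under composition yields
$$1 - n(x * y) \;=\; \deg(f \circ g) \;=\; (1-nx)(1-ny) \;=\; 1 - n(x+y-nxy),$$
hence $x*y = x+y-nxy$. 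The main obstacle I foresee is the spectral sequence comparison in the second step: besides matching the $E_2$-pages via Poincar\'e duality, one must also verify that the differentials and the stabilization transition match those of the Atiyah--Hirzebruch spectral sequence for $\Sigma^\infty_+ BC_n$; everything else (equivariant Freudenthal, the $E_2$ match, and the degree calculation) is essentially standard.
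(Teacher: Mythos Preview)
Your section-space identification and your $\pi_0$ argument are essentially the same as the paper's (the paper phrases the bundle as the fiberwise one-point compactification $S^{TL_n^{2N-1}}$ of the tangent bundle, and identifies your integer $x=(1-\deg f)/n$ geometrically as the signed count of zeros of the corresponding section, i.e.\ via the Lefschetz number; but the algebra is identical).

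The real divergence is in the middle step, and the obstacle you flag is genuine. Matching the $E_2$-pages of the Federer spectral sequence and the Atiyah--Hirzebruch spectral sequence by an \emph{abstract} Poincar\'e duality isomorphism does not by itself give a map of spectral sequences, so there is no a priori reason the differentials agree. To produce such a comparison map you would in effect need a map from the section space to $\Omega^\infty\Sigma^\infty (L_n^{2N-1})_+$, and constructing that map is exactly what the paper does instead of running your spectral sequence argument.

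Concretely, the paper observes that a based $D^i$-family of sections of $S^{TM}$ (with $M=L_n^{2N-1}$), put in general position with respect to the zero section, cuts out a closed $i$-manifold $W\subset \R^i\times M$ whose normal bundle is canonically identified with $TM$. This Thom--Pontryagin construction gives a bijection between $\pi_i\Gamma(S^{TM})$ and such bordism classes; stabilizing by adding trivial summands to $TM$ lands in the framed bordism group $\Omega_i^{fr}(M)\cong\pi_i^S(M_+)$, and fiberwise Freudenthal shows the stabilization maps are isomorphisms for $i<\dim M-1$. This route bypasses any spectral-sequence comparison entirely and is both shorter and more robust than trying to match differentials after the fact. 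If you want to salvage your approach, the cleanest fix is to write down this Pontryagin--Thom scanning map explicitly and then, if you like, observe that it induces your Poincar\'e duality isomorphism on $E_2$; but at that point the spectral sequences are no longer doing the work.
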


Note that this implies that the group $\pi_0G_n$ is trivial when $n\ge 3$, and that it has order two when $n$ is $1$ or $2$. Since $G_n(N)$ is an open and closed subset of $\mathcal M_n(N)$, $\pi_iG_n$ is the same as the finite group $\pi_i\mathcal M_n$ for $i\ge 1$.\\

\begin{proof}
For a smooth closed manifold $M$ and a point $p\in M$, let $T_pM$ be the tangent space and let $S^{T_pM}$ be its one-point compactification. This is the fiber over $p$ of a sphere-bundle $S^{TM}$. When $M$ is a sphere $\Sigma$, we may identify $S^{T_p\Sigma}$ with $\Sigma$ itself, letting $0$ correspond to $p$ and $\infty$ to $-p$. A map $\Sigma\to \Sigma$ corresponds then to a section of the bundle $S^{T\Sigma}$, with the identity corresponding to the section $0$. If $M$ is the orbit space for a free action of a finite group $\Gamma$ on $\Sigma$ by linear isometries, with quotient map $\pi:\Sigma\to M$, then we can identify $S^{T_{\pi(p)}M}$ with $\Sigma$, and in such a way that $\Gamma$-equivariant maps $\Sigma\to \Sigma$ correspond to sections of $S^{TM}$, again with the identity map corresponding to $0$ and the antipodal map to $\infty$. As a special case, we can interpret $\mathcal M_n(N)$ as the space of sections of $S^{TL_n(N)}$.\\

Let $\Gamma(S^{TM})$ be the space of sections.

Claim: For $i<m-1=\dim(M)-1$ we have $\pi_i\Gamma(S^{TM})\cong\pi_i^S(M_+)$.\\

In particular this yields the desired description of $\pi_i\mathcal M_n(N)$ for $i<2N-1$.\\

Proof of Claim: First make a Thom-Pontryagin isomorphism between $\pi_i\Gamma (S^{TM})$ and a set of bordism classes, namely classes of closed $i$-dimensional manifold $W\subset \R^i\times M$ equipped with an isomorphism between the normal bundle and $TM$; bordisms are in $I\times \R^i\times M$. (An element of $\pi_i\Gamma(S^{TM})$ is represented by a family $\sigma_u$ of sections of $S^{TM}$ parametrized by $u\in D^i$ with $\sigma_u=\infty$ for $u\in \partial D^i$. When this is in general position, it determines a manifold $W=\lbrace (u,x): \sigma_u(x)=0\rbrace$. The proof that this map from homotopy classes to bordism classes is well-defined and in fact bijective follows the usual pattern.) \\

Now consider the direct limit of 
$$\pi_i\Gamma (S^{TM})\to \pi_{i+1}\Gamma (S^{TM+1})\to \dots \to \pi_{i+k}\Gamma (S^{TM+k})\to \dots,
$$
where the first map is induced by the canonical map 
$$
\Gamma (S^{TM})\to \Omega \Gamma(S^{TM+1})
$$
and so on.
The $k$-th group corresponds to bordism classes of $i$-dimensional manifolds in $\R^{i+k}\times M$ with normal bundle $TM+k$. The direct limit is the $i$th framed bordism group of $M$. On the other hand the maps are all isomorphisms if $i<m-1$, by the Freudenthal Suspension Theorem applied to the maps $S^{T_pM+k}\to \Omega S^{T_pM+k+1}$. Thus
\[
	\pi_i\Gamma(S^{TM})\cong \Omega_i^{fr}(M)\cong\pi_i^S(M_+)
\]
if $i<m-1$.
\qed\\

For $i>0$ the isomorphism given by the Claim is a group isomorphism. In the case when $M$ is the lens space $L_n^{2N-1}$ it remains to describe the monoid structure on $\pi_0\mathcal M_n$. Let $f:S^{2N-1}\to S^{2N-1}$ be an equivariant map in the class corresponding to the integer $x$. The fixed points of $f$ occur in orbits of $n$ points, one orbit for each zero of the corresponding section of $S^{L_n^{2N-1}}$. Thus if these zeroes occur transversely and $x$ is the signed number of zeroes then $nx$ is the Lefschetz number of $f$ and $1-nx$ is the degree. The formula $x+y-nxy$ now follows since degree is multiplicative for composition.
\end{proof}

\subsection{The maps $\Delta^a:G_n(N)/U(N)\to \H^s(BC_n)$}\label{construction}

We now construct, for every $n>0$, a family of maps $\Delta^a:G_n(N)/U(N)\to \H^s(BC_n)$ indexed by $a\in \Z/n$. In the $n=1$ case the map is the composition of Hatcher's $G(2N)/O(2N)\to \H^s(*)$ with the canonical map $G(2N)/U(N)\to G(2N)/O(2N).$ We will first construct $\Delta^1$ and then obtain the other maps $\Delta^a$ from it using maps $BC_n\to BC_n$ which depend on $a$.\\

We specify the homotopy class $\Delta^1$ by giving, for every finite complex $B$, a map $\Delta^1_*:[B,G_n(N)/U(N)]\to[B,\H^s(BC_n)]$, natural in $B$. 
We may take $B$ to be a smooth compact manifold. \\

A map $\xi: B\to G_n(N)/U(N)$ corresponds to a rank $N$ complex vector bundle $\xi$ over $B$ together with a $C_n$-equivariant fiber homotopy trivialization of the sphere bundle $S^{2N-1}(\xi)$. 
This means that there is a commutative diagram
	$$\xymatrix{
		S^{2N-1}(\xi)\ar[r]^{\tilde H}\ar[d]	&	S^{2N-1}\times B\ar[d]\\
		B\ar[r]^=										&	B
		}
	$$
such that the map $\tilde H$ induces a $C_n$-equivariant homotopy equivalence in every fiber. Consider the orbit space of $S^{2N-1}(\xi)$ under the $C_n$-action, a lens space bundle $L_n^{2N-1}(\xi)\to B$. Denote the fiber over $t\in B$ by $L_{n,t}^{2N-1}(\xi)$. The map $\tilde H$ induces a homotopy equivalence 
	$$H_t:L_{n,t}^{2N-1}(\xi)\to L_n^{2N-1}.$$

We are trying to describe a bundle of \hcobs over $B$. In the interest of clarity, we will continue to explain the construction in terms of the fiber over each point $t\in B$.
If $M$ is large enough then we can lift the family of maps $H_t$ to a family of smooth embeddings
	$$\bar H_t:L_{n,t}^{2N-1}(\xi)\to L_n^{2N-1}\times D^M.$$
Let $\nu_t$ be the normal bundle of $\bar H_t$, a vector bundle over $L_{n,t}^{2N-1}(\xi)$. Make a fiberwise tubular neighborhood, an embedding
	$$G_t:D^M(\nu_t)\into L_n^{2N-1}\times D^M,$$
depending smoothly on $t$. Let $E'_t(\xi)$ be the closure of the complement of the image of $G_t$. \\

The manifold $E'_t(\xi)$ is an \hcob between (the total space of) the sphere bundle $S^{M-1}(\nu_t)$ over $L_{n,t}^{2N-1}(\xi)$ and $L_n^{2N-1}\times S^{M-1}$. Cross it with $I$ and attach the result to $L_n^{2N-1}\times D^M$ as in Figure 1 by embedding $I\times S^{M-1}(\nu_t)\into L_n^{2N-1}\times D^M$. (The embedding is the restriction of $G_t$ to a collar of the boundary.) This makes a bundle over $B$ whose fibers are \hcobs of $L_n^{2N-1}\times D^M$.\\

The resulting element of $[B,\H^s(BC_n)]$ is called $\Delta^1_*\xi$. It is clear that this depends only on the initial map $B\to G_n(N)/U(N)$, so that we have now defined a map $\Delta^1:G_n(N)/U(N)\to \H^s(BC_n)$.\\

More generally for $a\in \Z/n$ we define the generalized Hatcher map $\Delta^a$ as the composition
	$$\Delta^a:G_n(N)/U(N)\xrightarrow{\Delta^1}\H^s(BC_n)\xrightarrow{\H^s(Ba)}\H^s(BC_n),$$
where $Ba:BC_n\to BC_n$ is induced by the homomorphism $a:\zeta\mapsto \zeta^a$ from $C_n$ to itself.\\

\begin{Remark}Here is an equivalent description of the \hcob bundle that defines $\Delta^1_*\xi$: let $E_t\subset L_n^{2N-1}\times D^M\times I$ be given by 
	$$E_t=(G_t(D^M(\nu_t))\times I)\cup (L_n^{2N-1}\times D^M\times ([0,1/3]\cup [2/3,1])).$$
viewed as a cobordism of $L_n^{2N-1}\times D^M\times 0$. To see that it is equivalent to $E_t$ as first described, note that  the subset
	$$(G_t(D^M(\nu_t))\times I)\cup (L_n^{2N-1}\times D^M\times ([0,1/3]))$$
is a trivial cobordism of $L_n^{2N-1}\times D^M\times 0$ and that the remaining part, $E'_t(\xi)\times [2/3,1]$, is a copy of $E'_t(\xi)\times I$ attached in the right way. 
\end{Remark}



\section{Proof of the Main Theorem}

For each $a\in \Z/n$ the generalized Hatcher map $\Delta^a$ gives a homomorphism 
	$$\Delta^a_\ast: \pi_{2k}(G_n(N)/U(N))\otimes \Q\to  \pi_{2k}\H^s(BC_n)\otimes \Q,$$
where the left hand side is $1$-dimensional and the dimension  $l_{k,n}$ of the right hand side is $\lfloor {\frac{n-1}2}\rfloor$ if $k$ is odd and $\lfloor\frac{n+2}2\rfloor$ if $k$ is even. 

\begin{Theorem}[Main Theorem]\label{maintheorem} If $N\ge k>0$ then some set of maps $\Delta^a$ produces an isomorphism
	$$\pi_{2k}(G_n(N)/U(N))^{l_{k,n}}\otimes \Q\cong \pi_{2k}\H^s(BC_n)\otimes \Q.$$

\end{Theorem}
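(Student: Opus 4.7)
The plan is to prove the theorem by computing, for each $k > 0$, a specific $n \times n$ real matrix $M$ whose rows record how the $n$ maps $\Delta^a$ pair against the $n$ higher-torsion linear functionals $\tau^h_{k, BC_n, \F_{\zeta_n^b}}$ coming from the one-dimensional $C_n$-representations. Concretely, fix a generator $\xi$ of $\pi_{2k}(G_n(N)/U(N)) \otimes \Q$; the entry $M_{a,b}$ is the value on $[S^{2k}]$ of the Igusa-Klein torsion of the \hcob bundle $\Delta^a_*\xi \to S^{2k}$ with coefficients $\F_{\zeta_n^b}$. Showing $\mathrm{rank}(M) = l_{k,n}$ will simultaneously force the $n$ torsion functionals to have rank $l_{k,n} = \dim_\Q \pi_{2k}\H^s(BC_n) \otimes \Q$ and force the $n$ maps $\Delta^a_*$ together to surject rationally onto the target; selecting any $l_{k,n}$ of the $\Delta^a$ with linearly independent images then gives the claimed isomorphism.

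The first step is the explicit computation of $M$. Under the rational equivalence $G_n/U \simeq_\Q BU$, the generator $\xi$ corresponds to a rational vector bundle on $S^{2k}$ with $\langle ch_{2k}(\xi), [S^{2k}] \rangle \ne 0$. Using the description of $\Delta^1_*\xi$ given in the Remark of Section \ref{construction} (as a bundle assembled from $G_t(D^M(\nu_t)) \times I$ and two trivial cylinders over $L_n^{2N-1} \times D^M$), geometric additivity together with disk-bundle stability \eqref{IKstability} reduces the torsion with coefficients $\F_\zeta$ to that of the linear lens space bundle $L_n^{2N-1}(\xi) \to S^{2k}$, which by formula \eqref{formula} equals (up to a universal sign) $n^k L_{k+1}(\zeta) \langle ch_{2k}(\xi), [S^{2k}] \rangle$. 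For general $a$, the factorization $\Delta^a = \H^s(Ba) \circ \Delta^1$ combined with naturality \eqref{othernat} replaces $\F_{\zeta_n^b}$ by $(Ba)^* \F_{\zeta_n^b} = \F_{\zeta_n^{ab}}$, so $M_{a,b}$ is a universal nonzero scalar times $L_{k+1}(\zeta_n^{ab})$.

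The second step is to compute $\mathrm{rank}(M)$. The identities $L_{k+1}(\zeta^{-1}) = (-1)^k L_{k+1}(\zeta)$ and (for $k$ odd) $L_{k+1}(\pm 1) = 0$ give immediate relations on the columns that force $\mathrm{rank}(M) \le l_{k,n}$. For the reverse inequality I would carry out a Fourier-theoretic analysis. The classical Gauss-sum identity
\[
\sum_{t \bmod n} \chi(t)\,\mathcal{L}_{k+1}(\zeta_n^t) = G(\chi)\, L(k+1, \bar\chi),
\]
valid for primitive Dirichlet characters $\chi$ mod $n$, combined with taking real parts after multiplying by $i^{-k}$, expresses the relevant Fourier coefficients of $M$ as nonzero multiples of Dirichlet $L$-values $L(k+1, \bar\chi)$ precisely for those $\chi$ with parity matching $k$ (i.e. $\chi(-1) = (-1)^k$). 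A character count, together with the contribution of the trivial/quadratic characters at $c = 0$ and (for $n$ even) $c = n/2$, gives exactly $l_{k,n}$ contributing characters, and each $L(k+1, \bar\chi)$ is nonzero because $k+1 \ge 2$ lies in the region of absolute convergence $\Re(s) > 1$ of its Euler product. Hence $\mathrm{rank}(M) = l_{k,n}$.

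The main obstacle is the rank-lower-bound step, particularly the careful bookkeeping of non-primitive characters when $n$ is composite, where the Gauss-sum identity picks up additional Euler factors. A clean organization is to exploit the transfer property of torsion for coefficient systems together with the scaling relation \eqref{two} from Section \ref{lenstorsion} to decompose $M$ into blocks indexed by divisors $d \mid n$, so that each block arises from primitive characters of correct parity modulo $d$ and its rank is computed by the primitive Gauss-sum formula above; assembling the block ranks then recovers $l_{k,n}$ exactly.
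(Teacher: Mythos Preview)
Your first step---computing the pairing matrix $M_{a,b}$ by reducing the torsion of the Hatcher \hcob bundle to that of the linear lens space bundle via additivity, disk stability, and formula~\eqref{formula}, and handling general $a$ through naturality~\eqref{othernat}---is exactly the content of the paper's Lemma~\ref{actionontorsion}. Up to here you and the paper agree.

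The gap is in your rank lower bound. Your claim that ``a character count \dots\ gives exactly $l_{k,n}$ contributing characters'' is not correct as stated. Summing the number of primitive Dirichlet characters of parity $(-1)^k$ over all divisors $d\mid n$ gives the number of characters mod~$n$ of that parity, which is $\phi(n)/2$ for $n>2$, not $l_{k,n}$. For instance, take $n=6$ and $k$ even: the only primitive even character among all $d\mid 6$ is the trivial one mod~$1$, yet $l_{k,6}=4$. The point is that $M$ is an $n\times n$ matrix, not $\phi(n)\times\phi(n)$; the rows indexed by non-units (e.g.\ $a=0,2,3,4$ when $n=6$) contribute to the rank and are invisible to Dirichlet characters mod~$n$. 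Your proposed ``block decomposition by divisors'' does not repair this, because the multiplicative structure $M_{a,b}=f(ab)$ is not block-diagonal (or even block-triangular) for the gcd stratification, and you have not said what basis change would make it so.

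The paper sidesteps the parity bookkeeping entirely. It passes to the complex matrix $T_{a,b}=\L_{k+1}(\zeta_n^{ab})$ and observes $\overline T=TS$, where $S$ is the permutation $b\mapsto n-b$. Then, \emph{provided $T$ is invertible}, one has $\rk(\Re T)=\rk(1+S)=\lfloor\tfrac{n+2}{2}\rfloor$ and $\rk(\Im T)=\rk(1-S)=\lfloor\tfrac{n-1}{2}\rfloor$, which is precisely $l_{k,n}$ in each parity. So everything reduces to showing $\det T\neq 0$. That is done by induction on~$n$: the Kubert identity (exactly the combination of transfer and scaling you cite) shows that any null-vector of $T$ restricts, on multiples of a prime $p\mid n$, to a null-vector of the analogous matrix for $n/p$; by induction this part vanishes, leaving only the $(\Z/n)^*\times(\Z/n)^*$ block. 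That block is diagonalized by multiplicative characters, with eigenvalues $e_n(\chi)=\sum_{a}\L_{k+1}(\zeta_n^a)\overline\chi(a)$; for primitive $\chi$ the Gauss-sum identity gives $e_n(\chi)=c(1,\chi)L(\chi,k+1)\neq 0$, and for imprimitive $\chi$ one reduces again to smaller~$n$. Your Gauss-sum/$L$-function input is the right endgame, but the route to it needs the complex matrix and the induction, not a direct parity-by-parity block count.
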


\begin{Remark} More precisely, an isomorphism is given by using the maps $\Delta^a$ for $0<a\le \lfloor{ \frac{n-1}2}\rfloor$ if $k$ is odd, and for $0\le a\le \lfloor{ \frac{n}2}\rfloor$ if $k$ is even. 
\end{Remark}

To prove the theorem we have to show that certain elements of $\pi_{2k}\H^s(BC_n)$ are rationally linearly independent. We do this by determining the Igusa-Klein torsion of the corresponding \hcob bundles over $S^{2k}$.





\subsection{Calculation of the torsion of $\Delta^a_*\xi$}

For each $b\in \Z/n$ the class $\tau^h_{k,BC_n,\F_{\zeta_n^b}}\in H^{2k}(\H^s(BC_n);\R)$ associated with the local system $\F_{\zeta_n^b}$ on $BC_n$ gives a homomorphism $\pi_{2k}\H^s(BC_n)\to H^{2k}(S^{2k};\R)\cong \R$. We evaluate the composition of this with the map $\Delta^a_\ast$ for every $a$ and $b$. 

\begin{Lemma}\label{actionontorsion} For any $\xi\in \pi_{2k}(G_n(N)/U(N))$ we have 
$$\langle\tau^h_{k,BC_n,\F_{\zeta_n^b}},\Delta^a_*\xi\rangle=n^kL_{k+1}(\zeta_n^{ab})\langle ch_{2k}(\xi),\lbrack S^{2k}\rbrack\rangle.$$
\end{Lemma}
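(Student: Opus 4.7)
The plan is to trace the torsion of the bundle representing $\Delta^a_*\xi$ through the Hatcher construction, using additivity and disk-bundle stability to reduce the computation to the torsion of the lens space bundle $L_n^{2N-1}(\xi)\to S^{2k}$, which is already known via the polylogarithm formula \eqref{formula}.

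First I would reduce from $\Delta^a$ to $\Delta^1$ using naturality. Since $\Delta^a=\H^s(Ba)\circ\Delta^1$ by definition and $Ba:BC_n\to BC_n$ is induced by $\zeta\mapsto\zeta^a$, the homomorphism $a:C_n\to C_n$ sends $\zeta_n\mapsto\zeta_n^a$, so $(Ba)^*\F_{\zeta_n^b}=\F_{\zeta_n^{ab}}$. The naturality property \eqref{othernat} of the universal torsion classes therefore gives
$$\langle\tau^h_{k,BC_n,\F_{\zeta_n^b}},\Delta^a_*\xi\rangle = \langle\H^s(Ba)^*\tau^h_{k,BC_n,\F_{\zeta_n^b}},\Delta^1_*\xi\rangle = \langle\tau^h_{k,BC_n,\F_{\zeta_n^{ab}}},\Delta^1_*\xi\rangle,$$
so the task becomes computing $\tau^{\mathit{IK}}(E;\F_{\zeta_n^{ab}})$ where $E\to S^{2k}$ is the \hcob bundle representing $\Delta^1_*\xi$.

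Next I would use the alternative description of $E$ from the Remark following the construction. Writing $X=L_n^{2N-1}\times D^M$, the fiber decomposes as $E_t=A_t\cup B_t\cup C_t\subset X\times I$, where $A_t=X\times[0,1/3]$ and $C_t=X\times[2/3,1]$ are trivial slabs, while $B_t=G_t(D^M(\nu_t))\times I$ is the fiberwise tubular neighborhood crossed with $I$. Globally over $S^{2k}$, the subbundles $A$ and $C$ are product bundles, so $\tau^{\mathit{IK}}(A)=\tau^{\mathit{IK}}(C)=0$ by naturality. The subbundle $B$ and the intersections $A\cap B=G_t(D^M(\nu_t))\times[0,1/3]$ and $B\cap C=G_t(D^M(\nu_t))\times[2/3,1]$ are all iterated linear disk bundles over the lens space bundle $L_n^{2N-1}(\xi)\to S^{2k}$, so two applications of stability \eqref{IKstability} give
$$\tau^{\mathit{IK}}(B;\F_{\zeta_n^{ab}})=\tau^{\mathit{IK}}(A\cap B;\F_{\zeta_n^{ab}})=\tau^{\mathit{IK}}(B\cap C;\F_{\zeta_n^{ab}})=\tau^{\mathit{IK}}(L_n^{2N-1}(\xi);\F_{\zeta_n^{ab}}).$$
Since $A\cap C=\emptyset$, two applications of geometric additivity then yield
$$\tau^{\mathit{IK}}(E;\F_{\zeta_n^{ab}})=\tau^{\mathit{IK}}(B)-\tau^{\mathit{IK}}(A\cap B)-\tau^{\mathit{IK}}(B\cap C)=-\tau^{\mathit{IK}}(L_n^{2N-1}(\xi);\F_{\zeta_n^{ab}}).$$

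Finally I would invoke the polylogarithm formula \eqref{formula} to obtain $\tau^{\mathit{IK}}(L_n^{2N-1}(\xi);\F_{\zeta_n^{ab}})=-n^kL_{k+1}(\zeta_n^{ab})\ch_{2k}(\xi)$; combining with the minus sign from additivity and evaluating on $[S^{2k}]$ produces exactly the claimed identity. The most delicate point is the sign accounting in the threefold additivity, which must combine correctly with the sign built into \eqref{formula} to leave no overall minus in the final formula. A secondary issue is ensuring that Igusa-Klein torsion is actually defined on each piece of the decomposition (i.e.\ that condition (ii) holds), but this is essentially automatic: every piece is either a trivial bundle over $S^{2k}$ or a linear disk bundle over the lens space bundle, and the coefficient system is a one-dimensional representation of the cyclic fundamental group, so the unipotency hypothesis is routine.
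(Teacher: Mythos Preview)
Your overall strategy matches the paper's: reduce from $\Delta^a$ to $\Delta^1$ via naturality \eqref{othernat}, then use additivity and disk-bundle stability to identify the torsion of the \hcob bundle with $-\tau_k^{\mathit{IK}}(L_n^{2N-1}(\xi);\F)$, and finally invoke \eqref{formula}. The paper carries out the middle step using the \emph{first} description of $E(\xi)$ (splitting off $I\times E'(\xi)$ from a trivial piece along the sphere bundle $S^{M-1}(\nu)$, and then writing $E'(\xi)$ as the complement of $D^M(\nu)$ in a trivial bundle), whereas you use the three-piece decomposition $A\cup B\cup C$ coming from the Remark. Both routes lead to the same cancellation and the same sign.

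There is, however, a genuine technical slip in your decomposition. Geometric additivity, as stated, requires the pieces to meet along a codimension-zero submanifold of their \emph{boundaries}. With $B_t=G_t(D^M(\nu_t))\times[0,1]$ your overlaps $A\cap B=G_t(D^M(\nu_t))\times[0,1/3]$ and $B\cap C=G_t(D^M(\nu_t))\times[2/3,1]$ are full-dimensional, so the additivity hypothesis is not met and the inclusion--exclusion formula you wrote is not literally justified. The fix is immediate: take instead $B_t=G_t(D^M(\nu_t))\times[1/3,2/3]$. Then $E_t=A_t\cup B_t\cup C_t$ still holds, the intersections $A\cap B=G_t(D^M(\nu_t))\times\{1/3\}$ and $B\cap C=G_t(D^M(\nu_t))\times\{2/3\}$ are codimension-zero pieces of the relevant boundaries, and all three of $B$, $A\cap B$, $B\cap C$ are linear disk bundles over $L_n^{2N-1}(\xi)$. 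Your two applications of additivity and stability then go through verbatim and give $\tau_k^{\mathit{IK}}(E;\F)=-\tau_k^{\mathit{IK}}(L_n^{2N-1}(\xi);\F)$, in agreement with the paper.
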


\begin{proof} 

We first reduce to the case $a=1$ by observing that 
	$$\langle\tau^h_{k,BC_n,\F_{\zeta_n^b}},\Delta^a_*\xi\rangle=\langle\Delta^{a*}\tau^h_{k,BC_n,\F_{\zeta_n^b}},\xi\rangle=\langle\Delta^{1*}\tau^h_{k,BC_n,\F_{\zeta_n^{ab}}},\xi\rangle.$$
The second equation follows from the naturality statement \eqref{othernat} at the end of \ref{hcobtorsion}, with $f:X\to X'$ being the map $Ba:BC_n\to BC_n$ and $\F$ being the local system $\F_{\zeta_n^b}$ on $BC_n$. Note that $(Ba)^*\F_{\z_n^b}=\F_{\zeta_n^{ab}}$ and $\Delta^{1*}\circ (Ba)^*=\Delta^{a*}$. \\

For the case $a=1$, let $E(\xi)$ be the bundle of \hcobs representing $\Delta^1_*\xi$, as constructed in \ref{construction}. \\

Pulling back the class $\tau^h_{k,BC_n,\F_{\zeta_n^b}}$ by the map $\Delta^1_*\xi:S^{2k}\to \H^s(BC_n)$ gives $\tau_k^{\mathit{IK}}(E;\F_{\z_n^b})$, so that
	$$\langle\tau^h_{k,BC_n,\F_{\zeta_n^b}},\Delta^1_*\xi\rangle=\langle \tau_k^{\mathit{IK}}(E(\xi);\F_{\z_n^b}),\lbrack S^{2k}\rbrack\rangle.$$
To calculate the right-hand side we use additivity twice. \\

$E(\xi)$ is the union, along $S^{M-1}(\nu)$, of $I\times E'(\xi)$ and a trivial bundle, so we have (for any coefficient system) 
	$$\tau_k^{\mathit{IK}}(E(\xi);\F)=\tau_k^{\mathit{IK}}(I\times E'(\xi);\F)-\tau_k^{\mathit{IK}}(S^{M-1}(\nu);\F)=\tau_k^{\mathit{IK}}(E'(\xi);\F)-\tau_k^{\mathit{IK}}(S^{M-1}(\nu);\F).$$
Since $E'(\xi)$ is the closed complement of the embedded image of $D^M(\nu)$ in a trivial bundle, we have
	$$\tau_k^{\mathit{IK}}(E'(\xi);\F)=-\tau_k^{\mathit{IK}}(D^M(\nu);\F)+\tau_k^{\mathit{IK}}(S^{M-1}(\nu);\F).$$
Because $D^M(\nu)$ is a linear disk bundle over the lens-space bundle $L_n^{2N-1}(\xi)$, we also have 
	$$\tau_k^{\mathit{IK}}(D^M(\nu);\F)=\tau_k^{\mathit{IK}}(L_n^{2N-1}(\xi);\F).$$
Putting this together and specializing to $\F=\F_{\z^b}$, we find that 
	$$\tau_k^{\mathit{IK}}(E(\xi);\F_{\z^b})=-\tau_k^{\mathit{IK}}(L_n^{2N-1}(\xi);\F_{\z^b}),$$ 
which is equal to $n^kL_{k+1}(\zeta_n^{b})ch_{2k}(\xi)$ by \ref{lenstorsion}.  
\end{proof}

If $N\ge k$ then some element $\xi\in\pi_{2k}(G_n(N)/U(N))$ is rationally nontrivial and the Chern character $ch_{2k}(\xi)\in H^{2k}(S^{2k};\R)$ is nonzero. Thus to complete the proof of the Main Theorem it suffices to show that the matrix of real numbers 
	$$(L_{k+1}(\zeta_n^{ab}))_{a,b=1}^n.$$
has rank $\lfloor \frac{n+2}2\rfloor$ if $k$ is even and $\lfloor \frac{n-1}2\rfloor$ if $k$ is odd.

\subsection{Proof of linear independence}

The real matrix above is (up to sign) the real or imaginary part of the complex matrix $T$ given by
	$$T_{a,b}=\L_{k+1}(\zeta_n^{ab}),$$
according as $k$ is even or odd. \\

Note that $\bar{\L_{k+1}(z)}=\L_{k+1}(\bar z)$, so that $\bar T_{a,b}=T_{a,n-b}$. That is, $\bar T=TS$ where $S$ is the $n\times n$ permutation matrix defined by $S_{a,b}=1$ if $n$ divides $a+b$ and otherwise $S_{a,b}=0$. Assuming $T$ is invertible, we have
	$$\rk(\Re(T))=\rk(T+\bar T)=\rk(T+TS)= \rk(1+S)=\lfloor \frac{n+2}2\rfloor$$
and
	$$\rk(\Im(T))=\rk(T-\bar T)=\rk(T-TS)= \rk(1-S)=\lfloor \frac{n-1}2\rfloor.$$

Therefore the Main Theorem will follow from:

\begin{Lemma}\label{lastlemma}
For every $n\ge 1$ and $k\ge 1$ the matrix $T$ is invertible.
\end{Lemma}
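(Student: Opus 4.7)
The plan is to reduce invertibility of $T$ to the non-vanishing of Dirichlet $L$-values $L(k+1,\chi)$, which is immediate from the Euler product since $k+1 \ge 2$.

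First I would expand
$$\L_{k+1}(\zeta_n^{ab}) = \sum_{r\in\Z/n} \zeta_n^{abr}\,\eta_r, \qquad \eta_r := \sum_{\substack{j\ge 1\\ j\equiv r\,(n)}} j^{-(k+1)},$$
by grouping the defining series of $\L_{k+1}$ according to the residue of $j$ modulo $n$. This exhibits a factorization $T = F\cdot Q$, where $F_{a,s}=\zeta_n^{as}$ is the (invertible) DFT matrix and $Q_{s,b}=\sum_{r\in\Z/n:\, rb\equiv s\,(n)}\eta_r$. It now suffices to show that $Q$ is invertible.

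Next I would stratify $\Z/n$ by $d(b):=\gcd(b,n)$, setting $S^d=\{b\in\Z/n:\gcd(b,n)=d\}$ for each divisor $d$ of $n$. The support condition $Q_{s,b}\neq 0 \Rightarrow d(b)\mid d(s)$ is immediate from the definition, so ordering the basis by any linear extension of the divisibility order on divisors of $n$ makes $Q$ block lower-triangular with diagonal blocks $Q^{(d)}$ indexed by $S^d\times S^d$. Writing $b=d\beta$, $s=d\sigma$ with $\beta,\sigma\in(\Z/m)^*$ and $m := n/d$, a direct calculation collapses the defining sum and identifies
$$Q^{(d)}_{\sigma,\beta} = \tilde\eta_{\sigma\beta^{-1}}, \qquad \tilde\eta_\gamma := \sum_{\substack{j\ge 1\\ j\equiv \gamma\,(m)}} j^{-(k+1)},$$
which is the convolution matrix of $\tilde\eta$ on the finite abelian group $G = (\Z/m)^*$.

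A convolution matrix on a finite abelian group is diagonalized by its characters, and switching summations shows that the eigenvalue attached to a character $\chi$ of $G$ is
$$\mu_\chi = \sum_{\substack{j\ge 1\\ \gcd(j,m)=1}}\bar\chi(j)\,j^{-(k+1)} = L(k+1,\bar\chi).$$
For $k\ge 1$ the Euler product $L(k+1,\bar\chi)=\prod_p \bigl(1-\bar\chi(p)\,p^{-(k+1)}\bigr)^{-1}$ converges absolutely with every local factor nonzero, so $\mu_\chi\neq 0$ for all $\chi$ and $d$. Hence each $Q^{(d)}$ is invertible, $Q$ is invertible, and so is $T=F\cdot Q$. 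The main obstacle is organizational rather than analytic: one must verify the support condition that yields the block-triangular decomposition and then carefully identify each diagonal block with a convolution matrix on $(\Z/(n/d))^*$, juggling an additive Fourier transform in the variable $s$ with a multiplicative one in $\beta$. The non-vanishing of $L(k+1,\bar\chi)$ for $k+1\ge 2$ is the only analytic input, and it is classical.
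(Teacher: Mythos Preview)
Your proof is correct and takes a genuinely different route from the paper's. The paper proceeds by a double induction on $n$: it first isolates the submatrix indexed by $(\Z/n)^*$ (Lemma~\ref{relprime}), diagonalizes that by multiplicative characters, and then uses the Kubert identity for $\L_{k+1}$ together with the inductive hypothesis for each prime $p\mid n$ to show that invertibility of the unit block forces invertibility of all of $T$. The non-vanishing of the eigenvalues $e_n(\chi)$ (Lemma~\ref{nonzero}) is itself proved by a second induction, again via Kubert, reducing to aperiodic characters and then to Dirichlet $L$-values at $k+1$.

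Your approach bypasses both inductions and the Kubert identity entirely. Grouping the polylogarithm series by residues mod $n$ factors $T=FQ$ outright, and the block-triangular shape of $Q$ according to $\gcd(b,n)$ replaces the paper's inductive descent through divisors of $n$. Each diagonal block is immediately a convolution operator on $(\Z/(n/d))^*$, so all of the eigenvalues appear at once as (possibly imprimitive) Dirichlet $L$-values at $k+1$, and the Euler product finishes the job. This is cleaner and more self-contained; the paper's argument, in exchange, keeps the Kubert identity in the foreground, which is thematically consistent with its earlier appearance in the torsion computation (\S\ref{lenstorsion}).
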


\begin{Remark}\label{induction seed}
In the case $n=1$ this is the statement that $\L_{k+1}(1)$, which is the value of Riemann's $\zeta$ at $k+1>1$, is not zero. In the general case the statement will boil down, after some manipulations involving the Kubert identity and induction on $n$, to the non-vanishing of Dirichlet $L$-functions at $k+1>1$.
\end{Remark}

Let $(\Z/n)^*$ be the unit group of the ring $\Z/n$. Lemma \ref{lastlemma} will follow from a statement about a submatrix of $T$:

\begin{Lemma}\label{relprime}
For every $n\ge1$ and $k\ge 1$ the matrix $(T_{a,b})_{a,b\in (\Z/n)^*}$ is invertible.
\end{Lemma}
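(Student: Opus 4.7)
My plan is to diagonalize the matrix $M = (\mathcal{L}_{k+1}(\zeta_n^{ab}))_{a,b\in(\Z/n)^*}$ using the characters of the finite abelian group $(\Z/n)^*$. Since $M_{a,b}$ depends only on the product $ab$, setting $v_\chi(a):=\chi(a)$ one computes directly that $Mv_\chi = S(\chi)\,v_{\chi^{-1}}$ for each character $\chi$, where
$$S(\chi) := \sum_{c\in(\Z/n)^*}\chi(c)\,\mathcal{L}_{k+1}(\zeta_n^c).$$
The $v_\chi$ form a basis of $\C^{(\Z/n)^*}$, so $M$ is block diagonal in this basis with blocks of size at most two (pairing each $\chi$ with $\chi^{-1}$), with block determinants $\pm S(\chi)\,S(\chi^{-1})$. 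Invertibility of $M$ therefore reduces to showing $S(\chi)\neq 0$ for every character $\chi$ of $(\Z/n)^*$.

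To evaluate $S(\chi)$, I expand $\mathcal{L}_{k+1}$ as a Dirichlet series and interchange summations:
$$S(\chi) = \sum_{m\ge 1}\frac{1}{m^{k+1}}\sum_{c\in(\Z/n)^*}\chi(c)\,\zeta_n^{cm}.$$
When $\chi$ is \emph{primitive} modulo $n$, the inner sum is the standard twisted Gauss sum $\bar\chi(m)\,\tau(\chi)$ (with $\bar\chi$ extended by zero off units), whence $S(\chi) = \tau(\chi)\,L(k+1,\bar\chi)$. This is nonzero because $|\tau(\chi)|=\sqrt{n}$ and the Dirichlet $L$-function has no zeros in the region $\Re(s)>1$ of absolute convergence of its Euler product, a region which contains $s=k+1\ge 2$.

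For an imprimitive $\chi$, say induced from a primitive character $\chi^*$ modulo a proper divisor $d\mid n$, I would rewrite the unit condition $\gcd(c,n)=1$ by M\"obius inversion and then apply the Kubert identity from Section~\ref{lenstorsion} (which collapses each resulting sum of $\mathcal{L}_{k+1}$ over $(n/t)$-th roots of unity into a sum over $d$-th roots of unity). Terms with $\gcd(t,d)>1$ drop out because $\chi^*$ vanishes there, and what remains assembles into the closed form
$$S(\chi) = (n/d)^{-k}\,\tau(\chi^*)\,L(k+1,\bar\chi^*)\prod_{p\mid n/d,\,p\nmid d}\bigl(1-\chi^*(p)\,p^k\bigr).$$
The first three nontrivial factors are nonzero by the primitive case applied to $\chi^*$ modulo $d$; each Euler-type factor is nonzero because $|\chi^*(p)|\in\{0,1\}$ while $p^k\ge 2$, so $\chi^*(p)p^k\neq 1$. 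The main obstacle is the combinatorial bookkeeping in this imprimitive case: one must align the M\"obius inversion with the Kubert identity so that the dependence on the divisor $t$ and on the residue class $c\bmod d$ decouples cleanly, and then verify that no Euler correction $1-\chi^*(p)p^k$ accidentally vanishes. Once these checks are carried out, $S(\chi)$ is visibly nonzero for every character $\chi$ and Lemma~\ref{relprime} follows.
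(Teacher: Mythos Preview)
Your proposal is correct and follows essentially the same route as the paper. Both arguments diagonalize the matrix by characters of $(\Z/n)^*$, reduce invertibility to the nonvanishing of the character sums $S(\chi)$ (the paper calls them $e_n(\bar\chi)$), and treat the primitive case by recognizing $\tau(\chi)L(k+1,\bar\chi)$.

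The only difference is packaging of the imprimitive case. The paper argues by induction on $n$, peeling off one prime $p\mid n$ at a time via the Kubert identity and obtaining the recursion $e_n(\chi)=p^{-k}e_{n'}(\chi')$ or $e_n(\chi)=(p^{-k}-\bar\chi^*(p))e_{n'}(\chi')$ according as $p\mid n'$ or not. Your M\"obius step is simply the unrolled form of that induction: after inserting $\sum_{t\mid\gcd(c,n)}\mu(t)$, the terms with $\gcd(t,d)>1$ die because $\chi^*$ vanishes there, and summing the remaining $c'$ in fibers over $(\Z/d)^*$ using Kubert gives exactly
\[
S(\chi)=(n/d)^{-k}\,S_d(\chi^*)\prod_{p\mid n,\ p\nmid d}\bigl(1-\chi^*(p)p^k\bigr),
\]
which matches the product of the paper's inductive factors. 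So the ``combinatorial bookkeeping'' you flag as the main obstacle is routine and lines up with the paper's two cases; there is no hidden difficulty. Your nonvanishing checks ($|\tau(\chi^*)|=\sqrt d$, Euler product for $L(s,\bar\chi^*)$ at $s=k+1>1$, and $|\chi^*(p)p^k|\ge 2$) are the same as the paper's.
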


\begin{proof}[Proof of Lemma \ref{lastlemma} assuming Lemma \ref{relprime}] The proof is by induction on $n$. In the $n=1$ case Lemma \ref{relprime} and Lemma \ref{lastlemma} are the same statement, so there is nothing to prove. In the general case we deduce \ref{lastlemma} for $n$ from \ref{relprime} for $n$ and \ref{lastlemma} for all values less than, specifically for all $n/p$ when $p$ is a prime dividing $n$. 

Suppose that $(y_a)_{a\in\Z/n}$ are complex numbers such that 
$$
\sum_a y_aT_{a,b}=0$$
for all $b\in \Z/n$. We must show that $y_a=0$ for all $a$. If we can show that $y_a=0$ whenever $a\notin (\Z/n)^*$, then by Lemma \ref{relprime} the rest will follow.  \\

Thus for every prime factor $p$ of $n$ we must show that $y_a=0$ when $p$ divides $a$ . Write $n=pn'$.\\

For any $a$ such that $p$ does \emph{not} divide $a$, the Kubert identity (Equation \ref{Kubert}, \ref{lenstorsion}) for $\L_{k+1}$ implies that 
$$
\sum_{j=1}^{p}\ T_{a,b+jn'}={p^{-k}}T_{pa,b}.
$$
For any $a$ such that $p$ does divide $a$, we have $T_{a,b+jn'}=T_{a,b}$, and therefore
$$
\sum_{j=1}^{p}\ T_{a,b+jn'}=pT_{a,b}.
$$
Multiplying by $y_a$ and summing over all $a\in \Z/n$, we find that
\[
	\sum_{(p,a)=1} y_ap^{-k}T_{pa,b}+\sum_{p|a}\ y_apT_{a,b}=0.
\]
Since $\sum y_aT_{pa,b}=\sum y_aT_{a,pb}=0$ by assumption, this gives a relation involving only those $y_a$ for which $p$ divides $a$:
$$
\sum_{p|a}\ y_a (T_{a,b}-p^{-k-1}T_{pa,b})=0.
$$
Write this last expression as a sum over $\Z/n'$:
	$$\sum_{a\in \Z/n'}\ y_{pa}(T_{pa,b}-p^{-k-1}T_{p^2a,b})=0.$$
Let $T'$ be the $n'\times n'$ matrix analogous to $T$ but with $n'$ in place of $n$, so that
	$$T'_{a,b}=\L_{k+1}(\z_{n'}^{ab})=\L_{k+1}(\z_n^{pab})=T_{pa,b},$$
and write $y'_{a}=y_{pa}$ for $a\in \Z/n'$. The proof will be complete if we can show that $y'=0$.\\

The equation is now 
$$
\sum_{a\in \Z/n'}\ y'_{a}(T'_{a,b}-p^{-k-1}T'_{pa,b})=0.
$$
By inductive hypothesis the matrix $T'$ is invertible. The vector $y'$ is annihilated by the matrix $(1-p^{-k-1}E)T'$, where $E$ is the $n'\times  n'$ matrix whose non-zero entries are $E_{a,pa}=1$.  Every eigenvalue of $E$ is either zero or a root of unity (since the matrix maps the standard basis to itself). It follows that $1-p^{-k-1}E$ is also invertible. Thus $y'$ must be zero. 
\end{proof}

It remains only to prove Lemma \ref{relprime}. The proof will use multiplicative characters of $\Z/n$, in other words group homomorphisms $\chi:(\Z/n)^*\to \C^*$. In particular it will use sums of multiplicative characters against additive characters. For $m\in\Z/n$, define
	$$c(m,\chi)=\sum_{a\in (\Z/n)^*}\z_n^{ma}\bar\chi(a).$$
	
The $n\times\phi(n)$ matrix $(c(m,\chi))_{m,\chi}$ has rank $\phi(n)$. In fact, it is the product of the $n\times\phi(n)$ matrix $(\z_n^{ma})_{m\in\Z/n,a\in (\Z/n)^*}$ and the $\phi(n)\times \phi(n)$ matrix $(\bar\chi(a))_{a\in(\Z/n)^*,\chi}$. The former has rank $\phi(n)$, as it consists of some of the columns of an invertible $n\times n$ Vandermonde matrix, while the latter is invertible because characters are linearly independent.
In particular for each $\chi$ there is at least one $m\in \Z/n$ such that $c(m,\chi)\neq 0$. \\

Say that $\chi$ is \it aperiodic\rm\ if it has no period smaller than $n$, that is, if there is no proper divisor $d|n$ such that $\chi$ may be expressed as the composition $\chi'\circ\pi$ of a multiplicative character of $\Z/d$ with the canonical surjective homomorphism $\pi: (\Z/n)^*\to (\Z/d)^*$. \\

\begin{Proposition}\label{triangle}Suppose that $\chi$ is aperiodic. If $m\notin (\Z/n)^*$ then $c(m,\chi)= 0$. If $m\in (\Z/n)^*$ then $c(m,\chi)=c(1,\chi)\chi(m)\neq 0$.
\end{Proposition}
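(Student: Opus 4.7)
The plan is to treat the two cases of the proposition separately. For $m\in(\Z/n)^*$, substitute $a=m^{-1}a'$ in the defining sum; since $m^{-1}$ permutes $(\Z/n)^*$, this yields
$$c(m,\chi)=\sum_{a'\in(\Z/n)^*}\z_n^{a'}\bar\chi(m^{-1}a')=\chi(m)\,c(1,\chi),$$
reducing the nonvanishing assertion to the claim $c(1,\chi)\neq 0$, which I will handle at the end.

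For the vanishing assertion, suppose $m\notin(\Z/n)^*$ and set $d=\gcd(m,n)>1$, $n_1=n/d$, $m_1=m/d$. Then $\z_n^{ma}=\z_{n_1}^{m_1a}$ depends only on the residue of $a$ modulo $n_1$. The reduction map $\pi\colon(\Z/n)^*\to(\Z/n_1)^*$ is surjective (a standard CRT fact, since every unit mod $n_1$ lifts to a unit mod $n$), so I can regroup the sum over $a\in(\Z/n)^*$ by the value of $\pi(a)$. Writing $K=\ker\pi$ and choosing for each $a_0\in(\Z/n_1)^*$ a lift $\tilde a_0\in(\Z/n)^*$, one obtains
$$c(m,\chi)=\sum_{a_0\in(\Z/n_1)^*}\z_{n_1}^{m_1a_0}\,\bar\chi(\tilde a_0)\sum_{k\in K}\bar\chi(k).$$
The inner character sum on the finite abelian group $K$ vanishes unless $\bar\chi|_K$, and hence $\chi|_K$, is trivial. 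But triviality of $\chi|_K$ would let $\chi$ descend through the isomorphism $(\Z/n)^*/K\cong(\Z/n_1)^*$, giving $\chi$ a period $n_1<n$ and contradicting aperiodicity. Thus $c(m,\chi)=0$.

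Finally, to see $c(1,\chi)\neq 0$, I invoke the rank computation carried out immediately before the proposition in the paper, which guarantees that for every character $\chi$ there is some $m\in\Z/n$ with $c(m,\chi)\neq 0$. By the vanishing just proved, such an $m$ must lie in $(\Z/n)^*$, and then the identity $c(m,\chi)=\chi(m)c(1,\chi)$ combined with $\chi(m)\neq 0$ forces $c(1,\chi)\neq 0$.

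The only piece with real content is the regrouping step in the vanishing case, where the crucial point is identifying the factor that characterizes aperiodicity as the triviality of $\chi$ on $K=\ker\pi$; the other two steps are a change of variables and an appeal to the already-noted matrix rank fact, so I do not anticipate a serious obstacle.
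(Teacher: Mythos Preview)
Your proof is correct and follows essentially the same approach as the paper's: both use the change of variables $a\mapsto m^{-1}a$ for the unit case, both regroup the sum over cosets of $\ker\big((\Z/n)^*\to(\Z/n_1)^*\big)$ for the non-unit case (your $n_1=n/\gcd(m,n)$ is exactly the paper's $d$, the additive order of $m$), and both deduce $c(1,\chi)\neq0$ from the rank fact stated just before the proposition. The only difference is cosmetic---you name $\gcd(m,n)$ as $d$ while the paper names the additive order of $m$ as $d$---and you spell out the final nonvanishing step a bit more explicitly.
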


\begin{proof}[Proof] If $m\notin (\Z/n)^*$ then the order of $m$ in $\Z/n$ is some proper divisor $d$ of $n$. Let $K$ be the kernel of $\pi:(\Z/n)^*\to (\Z/d)^*$. Within any coset of $K$ in $(\Z/n)^*$ the function $a\mapsto \z_n^{am}$ is a constant. The sum of $\bar\chi$ over the coset is zero because the restriction of $\chi$ to $K$ is a nontrivial character. Thus the sum of $a\mapsto \z_n^{am}\bar\chi(a)$ is zero on each coset, and therefore the sum over the whole group is also zero.\\

If $m\in(\Z/n)^*$ then
	$$c(m,\chi)=\sum_{a\in (\Z/n)^*}\z_n^{ma}\bar\chi(a)=\sum_{a'\in (\Z/n)^*}\z_n^{a'}\bar\chi(a')\chi(m)=c(1,\chi)\chi(m),$$
by substituting $a'=am$. If one of these numbers were zero then they all would be zero. 
\end{proof}

\begin{proof}[Proof of Lemma \ref{relprime}]For any multiplicative character $\chi$ of $\Z/n$, define
	$$e_n(\chi)=\sum_{a\in (\Z/n)^*}\L_{k+1}(\z_n^a)\bar\chi(a).$$
Interpret the matrix of Lemma \ref{relprime} as a linear operator on the vector space of all functions $(\Z/n)^*\to \C$. As such it takes the basis vector $\bar\chi$ to a scalar multiple of the basis vector $\chi$, by a computation as in the proof of \ref{triangle}:
	$$\sum_a T_{m,a}\bar\chi(a)=\sum_a \L_{k+1}(\z_n^{ma})\bar\chi(a)=\sum_{a'}\L_{k+1}(\z_n^{a'})\bar\chi(a')\chi(m)=e_n(\chi)\chi(m).$$
Thus Lemma \ref{relprime} (and with it the Main Theorem) will follow from the next result.
\end{proof}

\begin{Lemma}\label{nonzero}
For any $n>0$, for any multiplicative character $\chi$ of $\Z/n$, the number $e_n(\chi)$ is different from zero.
\end{Lemma}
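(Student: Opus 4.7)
My plan is to split by whether $\chi$ is aperiodic. In either case, expanding the polylogarithm as a Dirichlet series and interchanging sums gives
\[
	e_n(\chi) \;=\; \sum_{m=1}^{\infty} \frac{c(m,\chi)}{m^{k+1}}.
\]
When $\chi$ is aperiodic, Proposition \ref{triangle} shows $c(m,\chi)=c(1,\chi)\chi(m)$ for $m\in(\Z/n)^*$ and zero otherwise, so
\[
	e_n(\chi) \;=\; c(1,\chi)\sum_{(m,n)=1}\frac{\chi(m)}{m^{k+1}} \;=\; c(1,\chi)\,L(k+1,\chi).
\]
The Gauss sum $c(1,\chi)$ is nonzero by Proposition \ref{triangle}, and the Dirichlet $L$-function equals the absolutely convergent Euler product $\prod_{p\nmid n}(1-\chi(p)p^{-s})^{-1}$ at $s=k+1>1$, each of whose factors is nonzero.

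For non-aperiodic $\chi$ I would induct on $n$, with base case $n=1$ giving $e_1=\zeta(k+1)\neq 0$. Since $\chi=\chi''\circ\pi$ for some character $\chi''$ of $(\Z/n')^*$ with $n'$ a proper divisor of $n$, one may reduce to $n'=n/p$ for a prime $p$ dividing $n/\mathrm{cond}(\chi)$. The Kubert identity \eqref{Kubert} for $\mathcal{L}_{k+1}$ with parameter $p$ reads
\[
	\mathcal{L}_{k+1}(\zeta_{n'}^{a'}) \;=\; p^k\sum_{j=0}^{p-1}\mathcal{L}_{k+1}(\zeta_n^{a'+jn'}).
\]
Multiplying by $\bar\chi''(a')$ and summing over $a'\in(\Z/n')^*$, I reindex by $a=a'+jn'\pmod n$. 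Pairs with $p\nmid a$ satisfy $a\in(\Z/n)^*$ and $\chi(a)=\chi''(a')$, contributing exactly $e_n(\chi)$. Pairs with $p\mid a$ arise only when $p\nmid n'$; for these $a=pc$ with $c\equiv p^{-1}a'\pmod{n'}$ in $(\Z/n')^*$ and $\zeta_n^{pc}=\zeta_{n'}^c$, and the substitution $c\leftrightarrow a'$ together with $\bar\chi''(a')=\bar\chi''(p)\bar\chi''(c)$ collects them into $\bar\chi''(p)\,e_{n'}(\chi'')$. Using the convention $\chi''(p)=0$ whenever $p\mid n'$, both situations combine into
\[
	e_{n'}(\chi'') \;=\; p^k e_n(\chi) + p^k\bar\chi''(p)\,e_{n'}(\chi''),
\]
so
\[
	e_n(\chi) \;=\; p^{-k}\bigl(1 - p^k\bar\chi''(p)\bigr)\,e_{n'}(\chi'').
\]
Since $|p^k\bar\chi''(p)|\in\{0,p^k\}$ and $p^k\geq 2$, the first factor is nonzero, and the second is nonzero by the inductive hypothesis.

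The main obstacle is the reindexing step: carefully tracking which of the $p\,\phi(n')$ residues $a'+jn'\pmod n$ fall in $(\Z/n)^*$ and which satisfy $p\mid a$, and recognizing the latter contribution, after the substitution $c=p^{-1}a'$, as a $\bar\chi''(p)$-twisted copy of $e_{n'}(\chi'')$. All of the nonvanishing inputs trace back, as anticipated in Remark \ref{induction seed}, to the classical nonvanishing of Dirichlet $L$-functions at real arguments greater than $1$, which is immediate from the Euler product.
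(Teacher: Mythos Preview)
Your proof is correct and mirrors the paper's argument closely: the aperiodic case is handled identically via the Gauss sum and the Euler product for $L(\chi,k+1)$, and your inductive step is precisely the paper's Cases~1 and~2 merged into a single formula by the convention $\chi''(p)=0$ when $p\mid n'$. After simplification your relation reads $e_n(\chi)=(p^{-k}-\bar\chi''(p))\,e_{n'}(\chi'')$, which is exactly the paper's formula (noting $\bar\chi''(p)=\chi''(p^{-1})=\chi(q)$).
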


\begin{proof} [Proof of Lemma \ref{nonzero}]

Again the proof is by induction on $n$. In the case when $\chi$ is not aperiodic we will obtain the non-vanishing of $e_n(\chi)$ from the non-vanishing of some $e_{n'}(\chi')$ with $n'<n$. In the case when $\chi$ is aperiodic we will prove the statement directly.

Suppose that $n=pn'$ for some prime $p$, let $\pi:(\Z/n)^*\to (\Z/n')^*$ be the canonical surjective homomorphism, and suppose that $\chi=\chi'\circ \pi$ for some multiplicative character $\chi'$ of $\Z/n'$.\\

Case 1: $p$ divides $n'$. In this case we show that 
	$$e_n(\chi)=p^{-k}e_{n'}(\chi')$$
by using the Kubert identity for $\L_{k+1}$.
For any $b\in (\Z/n')^*$ the $p$-th roots of $\z_{n'}^b=\z_n^{pb}$ are $\z_n^{\tilde b}$ for $\tilde b\in\pi^{-1}(b)$. Thus we have: 
\[
	\sum_{\tilde b\in\pi^{-1}(b)} \L_{k+1}(\z_n^{\tilde b})\bar\chi(\tilde b)={p^{-k}}\L_{k+1}(\z_{n'}^b)\bar\chi'(b).
\]
The formula for $e_n(\chi)$ follows by summing over $b\in (\Z/{n'})^*$.\\

Case 2: $p$ does not divide $n'$. In this case we show that
	$$e_n(\chi)=\left({p^{-k}}-\chi(q)\right)e_{n'}(\chi'),$$
where $q\in (\Z/n')^*$ is the inverse of $p$.
For any $b\in (\Z/n')^*$ the $p$-th roots of $\z_{n'}^b=\z_n^{pb}$ are the $p-1$ elements $\z_n^{\tilde b}$ for $\tilde b\in \pi^{-1}(b)$ plus the element $\z_{n'}^{bq}$. Thus, using the Kubert identity again, we have:
	$$\sum_{\tilde b\in\pi^{-1}(b)} \L_{k+1}(\z_n^{\tilde b})\bar\chi(\tilde b)+\L_{k+1}(\z_{n'}^{bq})\bar\chi(bq)\chi(q)={p^{-k}}\L_{k+1}(\z_{n'}^b)\bar\chi'(b).$$
Again the formula follows by summing over all $b$.\\

Finally, consider the case when $\chi$ is aperiodic. We have:

	$$e_n(\chi)=\sum_{a\in (\Z/n)^*}\L_{k+1}(\z_n^a)\bar\chi(a)=\sum_{a\in (\Z/n)^*}\sum_{m=1}^\infty \frac {\z_n^{ma}}{m^{k+1}}\bar\chi(a)=\sum_{m=1}^\infty \frac {c(m,\chi)}{m^{k+1}}.$$
By Proposition \ref{triangle} this becomes
	$$c(1,\chi)\sum_{(m,n)=1}\frac {\chi(m)}{m^{k+1}}=c(1,\chi)L(\chi,k+1),$$
where $L(\chi,-)$ is a Dirichlet $L$-function. \\

By Proposition \ref{triangle} the factor $c(1,\chi)$ is different from zero. 
The value of the $L$-function at $k+1>1$ is not zero, either, because of the Euler product formula:
	$$L(\chi,s)=\prod_p\ \frac{1}{1-\chi(p)p^{-s}}.$$
\end{proof}

This concludes the proof of Theorem \ref{maintheorem}.

\bibliographystyle{plain}

\end{document}